\numberwithin{equation}{section}
\newcommand{\stirling}[2]{\genfrac{[}{]}{0pt}{}{#1}{#2}}
\newcommand{\stirlingb}[2]{\genfrac{\{}{\}}{0pt}{}{#1}{#2}}
\newcommand{\Log}{\mathop{\mathrm{Log}}\nolimits}
\newcommand{\ii}{{\rm{i}}}
\newcommand{\mA}{{\tilde a}}
\newcommand{\mB}{{\tilde b}}
\newcommand{\mtA}{{\tilde {\mathfrak{a}}}}
\newcommand{\mtB}{{\tilde {\mathfrak{b}}}}
\newcommand{\lA}{a}
\newcommand{\lB}{b}
\newcommand{\rA}{\mathfrak{a}}
\newcommand{\rB}{\mathfrak{b}}
\newcommand{\bS}{\mathbb{S}}
\def\dint{\textup{d}}
\newcommand{\E}{\mathbb E}
\newcommand{\R}{\mathbb{R}}
\newcommand{\N}{\mathbb{N}}
\newcommand{\C}{\mathbb{C}}
\newcommand{\Z}{\mathbb{Z}}
\renewcommand{\Re}{\operatorname{Re}}
\newcommand{\pos}{\mathop{\mathrm{pos}}\nolimits}
\newcommand{\dd}{{\rm d}}
\newcommand{\eee}{{\rm e}}
\theoremstyle{plain}
\newtheorem{theorem}{Theorem}[section]
\newtheorem{corollary}[theorem]{Corollary}
\newtheorem{proposition}[theorem]{Proposition}
\theoremstyle{definition}
\theoremstyle{remark}
\newtheorem{remark}[theorem]{Remark}
\begin{document}

\author{Zakhar Kabluchko}
\address{Zakhar Kabluchko: Institut f\"ur Mathematische Stochastik,
Westf\"alische Wilhelms-Universit\"at M\"unster,
Orl\'eans-Ring 10,
48149 M\"unster, Germany}
\email{zakhar.kabluchko@uni-muenster.de}

\title[Expected Face Numbers of Random Polytopes]{On Expected Face Numbers of Random Beta and Beta' Polytopes}


\keywords{Stochastic geometry, random polytope, $f$-vector, beta distribution, beta prime distribution, random cone, solid angle, recurrence relations, Stirling numbers}

\subjclass[2010]{Primary: 52A22, 60D05; Secondary: 11B37, 11B73, 33E20, 33E30,  52A55, 52B11, 52A27.}

\begin{abstract}
The random beta polytope is defined as the convex hull of $n$ independent random points with the density proportional to $(1-\|x\|^2)^\beta$ on the $d$-dimensional unit ball, where $\beta>-1$ is a parameter. Similarly, the random beta' polytope is defined as the convex hull of $n$ independent random points with the density proportional to $(1+\|x\|^2)^{-\beta}$ on $\mathbb R^d$, where $\beta>\frac d2$.  In a previous work [Angles of random simplices and face numbers of random polytopes, Adv.\ Math.\, 380 (2021), 107612], we established exact and explicit formulae for the expected $f$-vectors of these random polytopes in terms of certain definite integrals.
In the present paper, we use purely algebraic manipulations to derive several identities for these integrals which yield  alternative formulae for the expected $f$-vectors.
Similar algebraic manipulations apply to Stirling numbers and yield the following identity:
$$
\sum_{s=0}^k \stirlingb{n-s}{d-s} (d-s) \stirling{d-s}{k-s}
=
\sum_{s=0}^k (-1)^s \stirlingb{n-s}{d} \stirling{d+1}{k-s}
=
\sum_{s=0}^{d-k} (-1)^s \stirlingb{n+1}{d-s} \stirling{d-s}{k}.
$$
\end{abstract}

\maketitle


\section{Random convex cones in a half-space}\label{sec:random_convex_cones_half_space}

In this paper we are interested in the exact formulae for the expected number of faces of certain random polytopes, called beta and beta' polytopes. For example, the convex hull of $n$ points drawn independently and uniformly from the $d$-dimensional unit ball is a special case of the beta polytope.  Since the statements of main results  are somewhat obscured by heavy notation, we start with a simple special case.

\subsection{Statement of the problem}
Let $U_1,\ldots,U_n$ be random vectors drawn uniformly and independently from the $d$-dimensional upper half-sphere
$$
\bS^d_+:=\{x = (x_0,\ldots,x_{d})\in \R^{d+1}: x_0\geq 0, \|x\|=1\}.
$$
Here, $\|x\|$ denotes the Euclidean norm of $x$. The polyhedral convex cone spanned by these vectors (also known as their positive hull) is denoted by
$$
C_n = \pos(U_1,\ldots,U_n) = \left\{\sum_{i=1}^n \lambda_i U_i: \lambda_1,\ldots,\lambda_n\geq 0\right\}.
$$
The study of the random cone $C_n$, together with the closely related spherical polytope $C_n\cap \bS^d_+$, has been initiated in the work of B\'ar\'any, Hug, Reitzner and Schneider in~\cite{barany_etal} and continued in~\cite{convex_hull_sphere}.
Explicit formulae for the expected values of several quantities associated with $C_n$ such as its solid angle, denoted by $\alpha(C_n)$, and the number of $k$-dimensional faces of $C_n$, denoted by $f_k(C_n)$, have been obtained in~\cite{kabluchko_poisson_zero}.
These formulae are stated in terms of two triangular arrays or numbers, denoted by $A[n,k]$ and $B\{n,k\}$. To define these numbers, we first of all put  $A[n,k]:= B\{n,k\}:=0$ if $n<k$.
The numbers $B\{n,k\}$, indexed by $n\in \N$ and $k\in\{0,\ldots,n\}$,  are defined by
\begin{align}
B\{n,k\} &:= \frac {1}{(k-1)!(n-k)!} \int_0^\pi (\sin x)^{k-1} x^{n-k} \dint x, \quad \text{ if } k\in \{1,\ldots,n\},\label{eq:def_B_n_k}\\
B\{n,0\} &:= B\{n,1\} = \frac{\pi^n}{n!}, \qquad n\in\N.
\end{align}
The numbers $A[n,k]$, indexed by $n\in\N_0$ and $k\in \Z$ with $k\leq n$,  may be defined by
$$
A[n,k] := \frac{n!}{(n-k)!} \frac 1 {\pi} \int_{-\infty}^{+\infty} (\cosh x)^{-n-1} \left(\frac \pi 2 + \ii x\right)^{n-k} \dd x;
$$
see~\cite[Section~6.9 and Eqn.~(6.8)]{kabluchko_formula} for this as well as~\cite[Theorem~1.1]{kabluchko_poisson_zero} and~\cite[Section~2.2]{kabluchko_formula} for other, equivalent definitions of $A[n,k]$.
For our purposes, only the following relations satisfied by $A[n,k]$ and $B\{n,k\}$ will be important:
\begin{align}
A[n+2,k] - A[n,k] &= (n+1)^2 A[n,k-2] \qquad \text{ for all $n\in\N_0$ and $k\in\Z$,} \label{eq:A_rec_citation}\\
B\{n,k\} - B\{n,k+2\} &= (k+1)^2 B\{n+2,k+2\} \qquad \text{ for all $n\in\N$ and $k\in \N_0$.} \label{eq:B_rec_citation}
\end{align}
The proofs of these relations can be found in~\cite[Lemmas~3.8 and~3.9]{kabluchko_poisson_zero}.
The formulae for the expected number of $k$-faces of the random cone $C_n$ can now be stated as follows.
\begin{theorem}\label{theo:spherical_polytope_f_vector}
For all $d\in\N$, $n\geq d+1$, and all $k\in \{1,\ldots,d\}$, we have
\begin{align}
\E f_{k} (C_n)
&=
\frac{n!\pi^{k-n}}{k!} \sum_{\substack{s=0,2,4,\ldots \\ s\leq d-k}} B\{n, d-s\}(d-s-1)^2 A[d-s-2,k-2]\label{eq:theo:spherical_polytope_f_vector1}\\
&=
\frac{n!\pi^{k-n}}{k!} \sum_{\substack{s=0,2,4,\ldots \\ s\leq d-k}} B\{n+s, d\} A[d,k+s].\label{eq:theo:spherical_polytope_f_vector2}
\end{align}
Also, the following complementary formulae hold:
\begin{align}
\binom{n}{k}  - \E f_{k} (C_n)
&=
\frac{n!\pi^{k-n}}{k!} \sum_{\substack{s=2,4,6,\ldots \\ s\leq n-d}} B\{n,d+s\}(d+s-1)^2 A[d+s-2,k-2]\label{eq:theo:spherical_polytope_f_vector1compl}\\
&=
\frac{n!\pi^{k-n}}{k!} \sum_{\substack{s=2,4,6,\ldots \\ s\leq n-d}} B\{n-s,d\} A[d,k-s].\label{eq:theo:spherical_polytope_f_vector2compl}
\end{align}
If  $k=1$ and $d$ is odd, then the right-hand sides of~\eqref{eq:theo:spherical_polytope_f_vector1} and~\eqref{eq:theo:spherical_polytope_f_vector1compl} involve the term $0^2 A[-1,-1]$ which we interpret as $A[1,1] - A[-1,1] = A[1,1] = 2/\pi$ in accordance with~\eqref{eq:A_rec_citation}.
\end{theorem}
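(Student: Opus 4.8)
The plan is to regard all four formulae as identities among the inner sums of products of the arrays $A[\cdot,\cdot]$ and $B\{\cdot,\cdot\}$: since every right-hand side carries the same prefactor $\frac{n!\pi^{k-n}}{k!}$, it suffices to prove the equalities among the four sums. I would take \eqref{eq:theo:spherical_polytope_f_vector1} as the known starting point (the formula supplied by the earlier work \cite{kabluchko_poisson_zero}) and derive the other three by purely algebraic manipulations driven only by the two recurrences \eqref{eq:A_rec_citation} and \eqref{eq:B_rec_citation}. These give the only two moves I need: rewriting \eqref{eq:A_rec_citation} with $n\mapsto m-2$ yields $(m-1)^2 A[m-2,k-2]=A[m,k]-A[m-2,k]$ (Move~A), and \eqref{eq:B_rec_citation} yields $B\{n,m\}-B\{n,m+2\}=(m+1)^2 B\{n+2,m+2\}$ (Move~B).

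To pass from \eqref{eq:theo:spherical_polytope_f_vector1} to \eqref{eq:theo:spherical_polytope_f_vector2}, I would apply Move~A to each summand, turning $B\{n,d-s\}(d-s-1)^2 A[d-s-2,k-2]$ into $B\{n,d-s\}\bigl(A[d-s,k]-A[d-s-2,k]\bigr)$; the sum then becomes an Abel (summation-by-parts) sum of a telescoping $A$-difference weighted by $B\{n,d-s\}$. Performing the summation by parts, the resulting differences of the $B$-weights are converted by Move~B into terms carrying $B\{n+2,\cdot\}$, and one further application of Move~A (in the form $(m+1)^2 A[m,k]=A[m+2,k+2]-A[m,k+2]$) reorganizes the rest. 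The net effect is the one-step reduction $S_1(n,k)=B\{n,d\}A[d,k]+S_1(n+2,k+2)$, where $S_1(n,k)$ is the sum in \eqref{eq:theo:spherical_polytope_f_vector1}; the only boundary term that could survive is $B\{n,d-2J\}A[d-2J-2,k]$ with $d-2J-2<k$, which vanishes by the convention $A[p,k]=0$ for $p<k$. In the exceptional case $k=1$, $d$ odd this vanishing boundary is precisely the degenerate term $0^2A[-1,-1]$, and the interpretation prescribed in the statement is exactly the instance of Move~A that keeps the rewriting valid there, so no separate argument is required. Unrolling the reduction gives $S_1(n,k)=\sum_{j\ge 0}B\{n+2j,d\}A[d,k+2j]$, which is \eqref{eq:theo:spherical_polytope_f_vector2}.

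For the complementary identities I would introduce the \emph{complete} sums obtained by extending the ranges of \eqref{eq:theo:spherical_polytope_f_vector1} and \eqref{eq:theo:spherical_polytope_f_vector2} to all admissible indices of the correct parity: let $\Sigma_1=\sum_{m}B\{n,m\}(m-1)^2A[m-2,k-2]$ over all $m\equiv d\Mod{2}$ (terms outside the valid range vanish automatically, since $B\{n,m\}=0$ for $m>n$ and $A[m-2,k-2]=0$ for $m<k$), and let $\Sigma_2$ be the analogous complete type-2 sum. The tails of $\Sigma_1$ and $\Sigma_2$ beyond $m=d$ (respectively beyond $s=0$) are exactly the sums appearing in \eqref{eq:theo:spherical_polytope_f_vector1compl} and \eqref{eq:theo:spherical_polytope_f_vector2compl}. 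The problem therefore reduces to the single total identity $\frac{n!\pi^{k-n}}{k!}\Sigma_1=\frac{n!\pi^{k-n}}{k!}\Sigma_2=\binom{n}{k}$, i.e.\ $\Sigma_1=\Sigma_2=\pi^{n-k}/(n-k)!$: once this is known, subtracting the equal partial sums \eqref{eq:theo:spherical_polytope_f_vector1}$=$\eqref{eq:theo:spherical_polytope_f_vector2} from the total yields both complementary formulae simultaneously.

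To establish the total identity I would first note that $\Sigma_1$ and $\Sigma_2$ are invariant under $(n,k)\mapsto(n+2,k+2)$: for $\Sigma_2$ this is a one-line reindexing, and for $\Sigma_1$ it follows by applying Move~B and then Move~A to the complete sum, where no boundary term survives precisely because the sum is complete. This shows each $\Sigma_i$ depends only on $n-k$, consistently with the target. Pinning down the constant is the main obstacle: the invariance leaves it undetermined, and since the number of summands does not shrink under the shift there is no one-term base case inside the admissible range. I would resolve it by returning to the integral definitions inside $\Sigma_2$ (where $d$ is frozen in both factors): the sum over the even index becomes an ``every-other-term'' slice of a binomial expansion and collapses, after the even/odd symmetrization, to $\frac{(x+\frac\pi2+\ii y)^{n-k}}{(n-k)!}$ inside the double integral $\int_0^\pi\!\int_{-\infty}^{+\infty}(\sin x)^{d-1}(\cosh y)^{-d-1}(\cdots)\,\dint y\,\dint x$, whose evaluation gives the required $\pi^{n-k}/(n-k)!$. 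The remaining effort is bookkeeping: the parity restriction to even indices, the two distinct sources of vanishing boundary terms ($A[p,k]=0$ for $p<k$ versus $B\{n,m\}=0$ for $m>n$), and the degenerate term at $k=1$, $d$ odd. The algebraic core, by contrast, uses nothing beyond \eqref{eq:A_rec_citation} and \eqref{eq:B_rec_citation}.
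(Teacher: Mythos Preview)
Your derivation of \eqref{eq:theo:spherical_polytope_f_vector1}$\Leftrightarrow$\eqref{eq:theo:spherical_polytope_f_vector2} coincides with the paper's: the one-step reduction $S_1(n,k)=B\{n,d\}A[d,k]+S_1(n+2,k+2)$ via Move~A, summation by parts, and Move~B is precisely Theorem~\ref{theo:identity_A_B_1}, equation~\eqref{eq:identity_A_B_1}, and your handling of the vanishing boundary term and of the degenerate $0^2A[-1,-1]$ is correct.

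For the complementary formulae the paper goes a different route. It does not evaluate the total sums from scratch: it cites $\Sigma_1=\pi^{n-k}/(n-k)!$ as identity~\eqref{eq:identity_2A} from~\cite[Lemma~4.3]{kabluchko_poisson_zero}, obtains~\eqref{eq:theo:spherical_polytope_f_vector1compl} by subtraction, and then proves \eqref{eq:theo:spherical_polytope_f_vector1compl}$\Leftrightarrow$\eqref{eq:theo:spherical_polytope_f_vector2compl} by a \emph{second}, independent algebraic reaction (the $S_d^+$ computation, equation~\eqref{eq:identity_A_B_2}), in which the $B$-terms disintegrate first rather than the $A$-terms. The $\Sigma_2$-evaluation you are aiming for appears in the paper only \emph{a posteriori}, as a corollary of~\eqref{eq:identity_2A} and Theorem~\ref{theo:identity_A_B_1} combined.

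The genuine gap in your plan is the direct evaluation of $\Sigma_2$. Your invariance $\Sigma_i(n,k)=\Sigma_i(n+2,k+2)$ is correct, but the integral step does not close as described. Substituting the definitions and summing over the even index, the parity-restricted slice of the binomial expansion yields
\[
\frac{1}{2(n-k)!}\Bigl[\bigl(x+\tfrac\pi2+\ii y\bigr)^{n-k}\pm\bigl(\tfrac\pi2+\ii y-x\bigr)^{n-k}\Bigr],
\]
not the single power you claim; the ``even/odd symmetrization'' produces the second term rather than eliminating it, and if instead you average the two parities you have computed only their mean, which does not determine each separately. In either case you are left with the coupled double integral $\int_0^\pi\!\int_{\R}(\sin x)^{d-1}(\cosh y)^{-d-1}(x+\tfrac\pi2+\ii y)^{n-k}\,\dd y\,\dd x$, for which you indicate no evaluation mechanism and which has no evident closed form. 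Without this, neither $\Sigma_1$ nor $\Sigma_2$ is pinned down (note also that your invariance argument alone does not yield $\Sigma_1=\Sigma_2$), and the complementary formulae remain unproved. The cleanest repair is to follow the paper: cite~\eqref{eq:identity_2A} for $\Sigma_1$, obtain~\eqref{eq:theo:spherical_polytope_f_vector1compl} by subtraction, and supply the second reaction~\eqref{eq:identity_A_B_2} for \eqref{eq:theo:spherical_polytope_f_vector1compl}$\Leftrightarrow$\eqref{eq:theo:spherical_polytope_f_vector2compl}.
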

Let us make some comments on this theorem. Equation~\eqref{eq:theo:spherical_polytope_f_vector1} has been obtained in~\cite[Theorem~2.2]{kabluchko_poisson_zero}. The equivalence of~\eqref{eq:theo:spherical_polytope_f_vector1} and~\eqref{eq:theo:spherical_polytope_f_vector2} constitutes the main result of this section and will be proved by purely algebraic manipulations in Theorem~\ref{theo:identity_A_B_1}. To interpret the complementary formulae~\eqref{eq:theo:spherical_polytope_f_vector1compl} and~\eqref{eq:theo:spherical_polytope_f_vector2compl},  we observe that with probability $1$, every $k$-face of $C_n$ is spanned by some $k$ distinct vectors from the set $\{X_1,\ldots,X_n\}$. Hence,  the quantity $\binom nk - f_k(C_n)$ counts the number of $k$-tuples of vectors that did not become faces.  Equation~\eqref{eq:theo:spherical_polytope_f_vector1compl} follows from~\eqref{eq:theo:spherical_polytope_f_vector1} together with the following identity, proved in~\cite[Lemma~4.3]{kabluchko_poisson_zero}: For all $n\in\N$ and all $k\in \{1,\ldots,n-1\}$ (but not for $k=n$) we have
\begin{align}
\sum_{\substack{j\in \{k,\ldots,n\} \\ j \text{ is even}}} B\{n,j\}  (j-1)^2 A[j-2,k-2] &= \sum_{\substack{j\in \{k,\ldots,n\} \\ j \text{ is odd}}} B\{n,j\}  (j-1)^2 A[j-2,k-2]= \frac{\pi^{n-k}}{(n-k)!}.\label{eq:identity_2A}
\end{align}
Observe that~\eqref{eq:identity_2A} just states that~\eqref{eq:theo:spherical_polytope_f_vector1} remains true for $n=d$ and $n=d-1$ provided we exclude the cases $k=d$ and $k\in \{d,d-1\}$, respectively.  Finally, the equivalence of~\eqref{eq:theo:spherical_polytope_f_vector1compl} and~\eqref{eq:theo:spherical_polytope_f_vector2compl} will be established in Theorem~\ref{theo:identity_A_B_1}.

Next we shall state formulae for the solid angle of the cone $C_n$ which is defined by
$
\alpha(C_n)
:=
\sigma_{d} (C_n\cap \bS_+^d)/\sigma_{d}(\bS^d)
$,
where $\sigma_{d}$ denotes the $d$-dimensional surface measure on the sphere $\bS^d$. The expected solid angle is linked to the number of $1$-dimensional faces of $C_n$ by the following spherical Efron-type identity, see~\cite[Equation~(26)]{barany_etal} (and also~\cite[Theorem 2.7]{convex_hull_sphere} for a generalization):
$$
\E \alpha(C_n) = \frac 12 \left(1 - \frac {\E f_1(C_{n+1})} {n+1}\right).
$$
Combining this identity with Theorem~\ref{theo:spherical_polytope_f_vector} and Identity~\eqref{eq:identity_2A}, we arrive at the following formulae the first of which has already been obtained in~\cite[Theorem~2.5]{kabluchko_poisson_zero}.
\begin{theorem}\label{theo:expected_angle}
For all $d\in\N$ and $n\geq d+1$ we have
\begin{align}
\E \alpha(C_n)
&=
\frac{n!}{2\pi^{n}} \sum_{\substack{s=2,4,6,\ldots \\ s\leq n+1-d}} B\{n+1, d+s\} (d+s-1)^2  A[d+s-2,-1]\\
&=
\frac{n!}{2\pi^{n}} \sum_{\substack{s=2,4,6,\ldots\\s\leq n+1-d}} B\{n+1-s,d\}A[d, 1-s].
\end{align}
Also, the following complementary formulae hold:
\begin{align}
\frac 12 - \E \alpha(C_n)
&=
\frac{n!}{2\pi^{n}} \sum_{\substack{s=0,2,4,\ldots \\ s\leq d-1}} B\{n+1, d-s\} (d-s-1)^2  A[d-s-2,-1]\\
&=
\frac{n!}{2\pi^{n}} \sum_{\substack{s=0,2,4,\ldots\\s\leq d-1}} B\{n+1+s,d\} A[d, 1+s].
\end{align}
\end{theorem}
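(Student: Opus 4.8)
The plan is to substitute the four formulae of Theorem~\ref{theo:spherical_polytope_f_vector} into the spherical Efron-type identity
\[
\E \alpha(C_n) = \frac 12 \left(1 - \frac {\E f_1(C_{n+1})} {n+1}\right),
\]
taken at $k=1$ with $n$ replaced by $n+1$. Since $\binom{n+1}{1}=n+1$, this identity can be written in two complementary shapes,
\[
\tfrac 12 - \E \alpha(C_n) = \frac{\E f_1(C_{n+1})}{2(n+1)}, \qquad \E \alpha(C_n) = \frac{\binom{n+1}{1}-\E f_1(C_{n+1})}{2(n+1)},
\]
and each shape calls for a different pair of formulae from Theorem~\ref{theo:spherical_polytope_f_vector}. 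So after recording these two rewritings, the whole statement should fall out by direct substitution, with no further analytic input.

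For the complementary angle formulae I would insert the direct $f_1$-formulae \eqref{eq:theo:spherical_polytope_f_vector1} and \eqref{eq:theo:spherical_polytope_f_vector2} (with $k=1$, $n\mapsto n+1$) into the expression for $\tfrac 12-\E\alpha(C_n)$; here $A[\,\cdot\,,k-2]=A[\,\cdot\,,-1]$ and the ranges $s\le d-k$ become $s\le d-1$, which is exactly what is claimed. Symmetrically, for the main angle formulae I would insert the complementary $f_1$-formulae \eqref{eq:theo:spherical_polytope_f_vector1compl} and \eqref{eq:theo:spherical_polytope_f_vector2compl} into the expression for $\E\alpha(C_n)$, where the ranges $s\le (n+1)-d$ reproduce the claimed $s\le n+1-d$. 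The structural point is a \emph{flip}: the principal formulae for the angle come from the complementary formulae for $f_1$ and vice versa, simply because passing from $\E f_1$ to $\binom{n+1}{1}-\E f_1$ toggles between the two halves of Theorem~\ref{theo:spherical_polytope_f_vector}. This is precisely where Identity~\eqref{eq:identity_2A} enters, since it is the mechanism converting \eqref{eq:theo:spherical_polytope_f_vector1} into its complementary counterpart \eqref{eq:theo:spherical_polytope_f_vector1compl}, thereby guaranteeing that the direct and flipped descriptions of the angle agree.

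What remains is bookkeeping of constants: the prefactor $\tfrac{(n+1)!\,\pi^{1-(n+1)}}{1!}$ supplied by Theorem~\ref{theo:spherical_polytope_f_vector} is divided by $2(n+1)$, and using $(n+1)!/(n+1)=n!$ together with $\pi^{1-(n+1)}=\pi^{-n}$ collapses it to the common factor $\tfrac{n!}{2\pi^{n}}$ appearing in all four displays. I expect the only genuinely delicate point to be the degenerate summand flagged in Theorem~\ref{theo:spherical_polytope_f_vector}: when $d$ is odd the admissible even index $s=d-1$ produces the term $0^2A[-1,-1]$ in the direct $f_1$-formula, and hence in the complementary angle formula, and this must be read through the recurrence \eqref{eq:A_rec_citation} as $A[1,1]-A[-1,1]=2/\pi$ rather than as $0$. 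I would check that with this convention the substitution stays valid in the odd-dimensional case; apart from this single term the argument is purely formal and needs nothing beyond the results already cited.
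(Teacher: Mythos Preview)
Your proposal is correct and matches the paper's own derivation: the paper likewise obtains Theorem~\ref{theo:expected_angle} by inserting the four formulae of Theorem~\ref{theo:spherical_polytope_f_vector} (with $k=1$ and $n$ replaced by $n+1$) into the spherical Efron identity, invoking Identity~\eqref{eq:identity_2A} only implicitly as the link between the direct and complementary $f_1$-formulae. Your bookkeeping of the prefactor and your treatment of the odd-$d$ convention $0^2 A[-1,-1]=2/\pi$ are exactly right.
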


\subsection{Identities for \texorpdfstring{$A[n,k]$}{A[n,k]} and \texorpdfstring{$B\{n,k\}$}{B\{n,k\}}}
The main result of this section are the following identities proving that~\eqref{eq:theo:spherical_polytope_f_vector1} is equivalent to~\eqref{eq:theo:spherical_polytope_f_vector2} and, similarly, \eqref{eq:theo:spherical_polytope_f_vector1compl} is equivalent to~\eqref{eq:theo:spherical_polytope_f_vector2compl}.
\begin{theorem}\label{theo:identity_A_B_1}
For all $n\in \N$, $d\in \{1,\ldots,n\}$ and $k\in \{1,\ldots,d+1\}$ we have
\begin{align}
\sum_{\substack{s=0,2,4,\ldots\\s\leq d-k}} B\{n,d-s\}(d-s-1)^2 A[d-s-2,k-2]
&=
\sum_{\substack{s=0,2,4,\ldots\\s\leq d-k}} B\{n+s,d\}A[d, k+s],\label{eq:identity_A_B_1}\\
\sum_{\substack{s=2,4,6,\ldots\\s\leq n-d}} B\{n,d+s\}(d+s-1)^2 A[d+s-2,k-2]
&=
\sum_{\substack{s=2,4,6,\ldots\\s\leq n-d}} B\{n-s,d\}A[d, k-s],\label{eq:identity_A_B_2}
\end{align}
where we recall the convention $0^2 A[-1,-1] := A[1,1] - A[-1,1] = A[1,1] = 2/\pi$.
\end{theorem}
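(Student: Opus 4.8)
The plan is to prove both identities by a single mechanism: rewrite the squared factor $(j-1)^2$ as a difference of two neighbouring entries of \emph{one} of the two arrays, perform an Abel summation (summation by parts) against the \emph{other} array, and thereby reduce each side to a shifted copy of itself plus a single surviving boundary term; an induction on the number of summands, terminating at an empty sum, then finishes the argument. In both identities I first substitute $j=d-s$ (in \eqref{eq:identity_A_B_1}) or $j=d+s$ (in \eqref{eq:identity_A_B_2}), so that the left-hand side becomes a sum of $B\{n,j\}(j-1)^2A[j-2,k-2]$ over an arithmetic progression of step $2$ and parity $d$; the only structural difference between the two cases is the range of $j$, namely $j\le d$ versus $j\ge d+2$.

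For \eqref{eq:identity_A_B_1} I would feed the weight to the $A$-array. By \eqref{eq:A_rec_citation} (with first index $j-2$) one has $(j-1)^2A[j-2,k-2]=A[j,k]-A[j-2,k]$, an identity that remains valid at the boundary value $j=1$ precisely because of the stated convention $0^2A[-1,-1]=A[1,1]-A[-1,1]$. Summation by parts then transfers a difference onto the $B$-factors, and \eqref{eq:B_rec_citation} rewrites each such difference as $B\{n,j\}-B\{n,j+2\}=(j+1)^2B\{n+2,j+2\}$, reassembling the left-hand side with parameters shifted to $(n+2,d,k+2)$. Here the \emph{lower} boundary term carries a factor $A[j-2,k]$ with $j-2<k$, hence vanishes by the below-diagonal convention, while the \emph{upper} boundary produces $B\{n,d\}A[d,k]$. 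The net effect is the recurrence $\mathrm{LHS}(n,d,k)=B\{n,d\}A[d,k]+\mathrm{LHS}(n+2,d,k+2)$; the right-hand side obeys the same recurrence after peeling off its $s=0$ summand and reindexing, and both sides are empty once $k>d$, so induction on $d-k$ concludes.

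For \eqref{eq:identity_A_B_2} the same idea works, but now the weight must be fed to the $B$-array: by \eqref{eq:B_rec_citation} (with second index $j-2$) one has $(j-1)^2B\{n,j\}=B\{n-2,j-2\}-B\{n-2,j\}$. Summing by parts against the $A$-factors and using \eqref{eq:A_rec_citation} to convert the resulting differences of consecutive $A$-entries back into the weighted form $(j-1)^2A[j-2,k-4]$, one reassembles the left-hand side at $(n-2,d,k-2)$. This time it is the \emph{upper} boundary term that dies, since its $B$-factor is $B\{n-2,j\}$ with $j>n-2$, while the \emph{lower} boundary contributes $B\{n-2,d\}A[d,k-2]$. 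The upshot is the complementary recurrence $\mathrm{LHS}(n,d,k)=B\{n-2,d\}A[d,k-2]+\mathrm{LHS}(n-2,d,k-2)$, which the right-hand side of \eqref{eq:identity_A_B_2} obeys after peeling off its $s=2$ summand and reindexing; both sides are empty when $n\le d+1$, so induction on $n-d$ finishes. Pleasantly, this route establishes the complementary identity directly, without invoking \eqref{eq:identity_2A}.

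The conceptual heart of the proof—and the step most likely to cause trouble—is the recognition that the two identities are proved not by the same computation but by \emph{dual} ones: in \eqref{eq:identity_A_B_1} the squared weight must be absorbed by $A$ so that the surviving boundary sits at the top of the range and the vanishing one at the bottom, whereas in \eqref{eq:identity_A_B_2} it must be absorbed by $B$ so that the two endpoints exchange roles. Choosing the wrong array leaves a boundary term of the form $B\{n-2,d\}A[d-2,k-2]$ (respectively $B\{n,d\}A[d,k]$) that does not vanish, and the clean recurrence is lost. The remaining work is bookkeeping: tracking the parity of $j$, and verifying that after the shift $(n,k)\mapsto(n+2,k+2)$ (respectively $(n-2,k-2)$) the truncated progression of indices—in particular the top index of the range—matches exactly the one occurring in the same expression at the shifted parameters. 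All of these checks ultimately rest on the single fact that $A[m,k]=0$ and $B\{m,k\}=0$ whenever $m<k$.
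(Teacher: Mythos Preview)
Your proposal is correct and follows essentially the same approach as the paper: for \eqref{eq:identity_A_B_1} absorb the squared weight into the $A$-array via \eqref{eq:A_rec_citation}, telescope, and reassemble using \eqref{eq:B_rec_citation} to obtain the recurrence $S_d^-(n,k)=B\{n,d\}A[d,k]+S_d^-(n+2,k+2)$; for \eqref{eq:identity_A_B_2} reverse the roles, absorbing the weight into the $B$-array first to get $S_d^+(n,k)=B\{n-2,d\}A[d,k-2]+S_d^+(n-2,k-2)$, and in both cases iterate until the sum is empty. Your framing as Abel summation and your explicit identification of which boundary term survives and which vanishes match the paper's computation term for term.
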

\begin{proof}
Using the recurrence relations~\eqref{eq:A_rec_citation} and~\eqref{eq:B_rec_citation},  we shall transform the sum on the left-hand side of~\eqref{eq:identity_A_B_1} in a way which can be thought of  as a ``chemical reaction'' in which the $A$-terms disintegrate, then the terms regroup, and finally the $B$-terms rejoin. The result is a sum which differs from the original one by a shift of the indices $n,k$ plus an additional term, the ``sediment'' of the reaction. If we let the terms in the sum react over and over again, more and more sediment terms are going to appear. At some point, the sum becomes empty and the reaction stops.  At this point we conclude that the right-hand side of~\eqref{eq:identity_A_B_1} equals the sum of the sediment terms.
Let us be more specific. We denote the sum on the left-hand side of~\eqref{eq:identity_A_B_1} by
$$
S_d^-(n,k) := \sum_{\substack{s=0,2,4,\ldots\\s\leq d-k}} B\{n,d-s\}(d-s-1)^2 A[d-s-2,k-2].
$$
The following considerations hold for arbitrary $n\in \N$, $d\in \N$ and $k\in \N$.  We start by applying the recurrence relation~\eqref{eq:A_rec_citation} satisfied by the numbers  $A[\cdot,\cdot]$ to rewrite our sum as follows:
$$
S_d^-(n,k) = \sum_{\substack{s=0,2,4,\ldots\\s\leq d-k}} B\{n,d-s\}(A[d-s,k]- A[d-s-2,k]).
$$
Now we extract the first term  as follows:
$$
S_d^-(n,k) = B\{n,d\}A[d,k] + \sum_{\substack{s=2,4,6,\ldots\\s\leq d-k}} B\{n,d-s\} A[d-s,k] - \sum_{\substack{s=0,2,4,\ldots\\s\leq d-k-2}} B\{n,d-s\}  A[d-s-2,k].
$$
Note that in the second sum the term with $s= d-k$ or $s=d-k-1$ vanishes which is why we dropped  it. Now we shift the index in the first sum and take both sums together:
$$
S_d^-(n,k) = B\{n,d\}A[d,k] + \sum_{\substack{s=0,2,4,\ldots\\s\leq d-k-2}}(B\{n,d-s-2\} - B\{n,d-s\})  A[d-s-2,k].
$$
Applying the recurrence relation~\eqref{eq:B_rec_citation} satisfied by the numbers $B\{\cdot,\cdot\}$, we arrive at
$$
S_d^-(n,k) = B\{n,d\}A[d,k] + \sum_{\substack{s=0,2,4,\ldots\\s\leq d-k-2}} B\{n+2,d-s\} (d-s-1)^2 A[d-s-2,k].
$$
On the right-hand side we recognize the same sum we started with, but with $(n,k)$ replaced by $(n+2,k+2)$:
$$
S_d^-(n,k) = B\{n,d\}A[d,k] + S_{d}^-(n+2,k+2).
$$
Iterating this identity $r$ times, we obtain
\begin{align*}
S_d^-(n,k) &= B\{n,d\}A[d,k] + B\{n+2,d\}A[d,k+2] + \ldots + B\{n+2r-2,d\}A[d,k+2r-2]
\\
&+  S_{d}^-(n+2r,k+2r).
\end{align*}
For a sufficiently large $r$ we have $k+2r>d$ and therefore $S_{d}^-(n+2r,k+2r)=0$ because the sum becomes empty.
The proof of~\eqref{eq:identity_A_B_1} is complete.

The proof of~\eqref{eq:identity_A_B_2} is similar, but now the ``reaction'' begins with the disintegration of the $B$-terms. First of all, \eqref{eq:identity_A_B_2} is trivially fulfilled for $n=1,2$ since in this case both sums become empty. In the following we assume that $n\geq 3$, $d\in \N$ and $k\in \N$.    Applying the recurrence relation for the $B$-numbers given in~\eqref{eq:B_rec_citation}, regrouping the terms, and then applying the recurrence relation for the $A$-numbers given in~\eqref{eq:A_rec_citation} yields
\begin{align*}
S_d^+(n,k)
&:=
\sum_{\substack{s=2,4,6,\ldots\\s\leq n-d}} B\{n,d+s\}(d+s-1)^2 A[d+s-2,k-2]\\
&=
\sum_{\substack{s=2,4,6,\ldots\\s\leq n-d}} (B\{n-2,d+s-2\} - B\{n-2,d+s\}) A[d+s-2,k-2]\\
&=
B\{n-2,d\}A[d,k-2] + \sum_{\substack{s=4,6,8,\ldots\\s\leq n-d}} B\{n-2,d+s-2\} A[d+s-2,k-2]\\
&
\phantom{=B\{n-2,d\}A[d,k-2]}
-
\sum_{\substack{s=2,4,6,\ldots\\s\leq n-d-2}} B\{n-2,d+s\} A[d+s-2,k-2]\\
&=
B\{n-2,d\}A[d,k-2] + \sum_{\substack{s=2,4,6,\ldots\\s\leq n-d-2}} B\{n-2,d+s\} A[d+s,k-2]\\
&\phantom{=B\{n-2,d\}A[d,k-2]}
-
\sum_{\substack{s=2,4,6,\ldots\\s\leq n-d-2}} B\{n-2,d+s\} A[d+s-2,k-2]\\
&=
B\{n-2,d\}A[d,k-2] + \sum_{\substack{s=2,4,6,\ldots\\s\leq n-d-2}} B\{n-2,d+s\} (A[d+s,k-2] - A[d+s-2,k-2])\\
&=
B\{n-2,d\}A[d,k-2] + \sum_{\substack{s=2,4,6,\ldots\\s\leq n-d-2}} B\{n-2,d+s\} (d+s-1)^2 A[d+s-2,k-4]\\
&=
B\{n-2,d\}A[d,k-2] + S_d^+(n-2,k-2).
\end{align*}
Iterating this identity $r$ times, we arrive at
\begin{align*}
S_d^+(n,k)
&=
B\{n-2,d\}A[d,k-2] + B\{n-4,d\}A[d,k-4] + \ldots + B\{n-2r,d\}A[d,k-2r]\\
& +  S_d^+(n-2r,k-2r).
\end{align*}
For a suitable $r$, we have $1\leq n-2r\leq d+1$,  the last sum becomes empty and the proof of~\eqref{eq:identity_A_B_2} is complete.
\end{proof}

Combining Theorem~\ref{theo:identity_A_B_1} with~\eqref{eq:identity_2A}, we obtain the following

\begin{corollary}
For all $d\in \N$ and $\ell\in \N$ we have
\begin{equation}\label{eq:sum_A_B_joint_second_argument}
\sum_{\substack{r\in \{d,\ldots,d+\ell\}\\ r \text{ is even}}} B\{r,d\}A[d,r-\ell]
=
\sum_{\substack{r\in \{d,\ldots,d+\ell\}\\ r \text{ is odd}}} B\{r,d\}A[d,r-\ell]
=
\frac{\pi^\ell}{\ell!}.
\end{equation}
\end{corollary}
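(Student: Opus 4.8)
The plan is to obtain \eqref{eq:sum_A_B_joint_second_argument} by splitting the single ``collapsed'' identity \eqref{eq:identity_2A} at the index $d$ and then running the two resulting halves through the two identities of Theorem~\ref{theo:identity_A_B_1}. The point is that the left-hand sums $S_d^-(n,k)$ and $S_d^+(n,k)$ appearing in the proof of Theorem~\ref{theo:identity_A_B_1} are, after the substitutions $j=d-s$ and $j=d+s$ respectively, nothing but the two pieces of the sum in \eqref{eq:identity_2A} that run over $j\in\{k,\ldots,d\}$ and over $j\in\{d+2,\ldots,n\}$ subject to $j\equiv d\pmod 2$. Indeed, the common summand $B\{n,j\}(j-1)^2A[j-2,k-2]$ is exactly the one occurring in \eqref{eq:identity_2A}.

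Consequently I would first record that $S_d^-(n,k)+S_d^+(n,k)$ is precisely the restriction of the sum in \eqref{eq:identity_2A} to indices $j\equiv d\pmod 2$, i.e. the even-$j$ sum when $d$ is even and the odd-$j$ sum when $d$ is odd. Since \eqref{eq:identity_2A} asserts that both the even and the odd halves equal $\pi^{n-k}/(n-k)!$, this gives, for every $k\in\{1,\ldots,n-1\}$, the single relation $S_d^-(n,k)+S_d^+(n,k)=\pi^{n-k}/(n-k)!$. Next I would rewrite each half in the $B\{\cdot,d\}A[d,\cdot]$ form dictated by \eqref{eq:identity_A_B_1} and \eqref{eq:identity_A_B_2}. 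Writing $\ell:=n-k$ and re-indexing by $r=n+s$ in the image of $S_d^-(n,k)$ and by $r=n-s$ in the image of $S_d^+(n,k)$, both summands become $B\{r,d\}A[d,r-\ell]$; moreover, as $s$ ranges over the even values in the respective admissible ranges, the variable $r$ sweeps out exactly the indices of fixed parity $r\equiv n\pmod 2$ inside $\{d,\ldots,d+\ell\}$, the image of $S_d^-$ supplying $r\ge n$ and the image of $S_d^+$ supplying $r<n$. The two pieces meet without gap or overlap, so
$$
\sum_{\substack{r\in\{d,\ldots,d+\ell\}\\ r\equiv n\ (\mathrm{mod}\ 2)}} B\{r,d\}A[d,r-\ell]=\frac{\pi^\ell}{\ell!}.
$$

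Finally, to capture both the even-$r$ and the odd-$r$ assertions of \eqref{eq:sum_A_B_joint_second_argument}, I would apply the displayed relation to two choices of $(n,k)$ having the same difference $n-k=\ell$ but opposite parities of $n$: the choice $(n,k)=(d+\ell,d)$ selects the parity $r\equiv d+\ell$, while $(n,k)=(d+1+\ell,d+1)$ selects the complementary parity $r\equiv d+1+\ell$. One checks directly that for all $d,\ell\in\N$ both choices satisfy the hypotheses $d\in\{1,\ldots,n\}$ and $k\in\{1,\ldots,d+1\}$ of Theorem~\ref{theo:identity_A_B_1} as well as $k\in\{1,\ldots,n-1\}$ required by \eqref{eq:identity_2A}, the only constraint used being $\ell\ge1$. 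Since $d+\ell$ and $d+1+\ell$ have opposite parities, the two applications yield the even-$r$ sum and the odd-$r$ sum respectively, each equal to $\pi^\ell/\ell!$, which is \eqref{eq:sum_A_B_joint_second_argument}. I expect the main obstacle to lie not in any analytic input but in this parity-and-endpoint bookkeeping: verifying that the two re-indexed ranges fuse exactly into $\{r\in\{d,\ldots,d+\ell\}:r\equiv n\}$ for all admissible $k$, and that the degenerate boundary term $0^2A[-1,-1]$, which can enter the intermediate sums only in the case $d=k=1$, is consistently absorbed by the convention already adopted in Theorem~\ref{theo:identity_A_B_1}.
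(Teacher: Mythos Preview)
Your proposal is correct and follows essentially the same route as the paper: add the two identities of Theorem~\ref{theo:identity_A_B_1}, recognise the combined left-hand side as one parity class of the sum in~\eqref{eq:identity_2A} (hence equal to $\pi^{n-k}/(n-k)!$), re-index the right-hand side by $r=n\pm s$, and then instantiate at the two pairs $(n,k)=(d+\ell,d)$ and $(n,k)=(d+\ell+1,d+1)$ to obtain both parities of~$r$. These are exactly the two instantiations the paper uses (in the opposite order), and your bookkeeping of ranges, parities, and the boundary convention $0^2A[-1,-1]$ matches.
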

\begin{remark}
The identity fails to hold for $\ell=0$ because then we have $B\{d,d\}A[d,d] = 2$.
\end{remark}
\begin{proof}
Put $n:= d+\ell +1$ and $k:=d+1\leq n-1$. Note that $\ell = n-k$.  Taking the sum of the both identities in Theorem~\ref{theo:identity_A_B_1}, and using $j$ as a substitute for $d+s$ or $d-s$, we obtain
\begin{align*}
\sum_{\substack{j\in \{k,\ldots,n\}\\ j-d \text{ is even}}} B\{n,j\} (j-1)^2 A[j-2,k-2]
&=
\sum_{\substack{s\in \{d-n,\ldots,d-k\}\\ s \text{ is even}}} B\{n+s,d\} A[d,k+s]\\
&=
\sum_{\substack{r\in \{d,\ldots,d+\ell\}\\ d+\ell-r \text{ is odd}}} B\{r,d\} A[d,r-\ell],
\end{align*}
where in the second line we defined the new summation index $r:= n+s$. By~\eqref{eq:identity_2A}, the left-hand side of this identity equals $\pi^\ell/\ell!$, which proves one of the identities in~\eqref{eq:sum_A_B_joint_second_argument} depending on the parity of $d+\ell$. To prove the remaining identity, we take $n:=d+\ell$ and $k:= d\leq n-1$. We again have $\ell = n-k$ and can repeat the same argument as above with the only difference that the condition that $s= r-n$ is even will be translated into the requirement that $d+ \ell -r$ is even.
\end{proof}

\section{Beta polytopes}

Let us now define the random polytopes we are interested in. In the present section, we state our results on beta polytopes (which are contained in the unit ball), while the next section will be devoted to beta' polytopes.

\subsection{Statement of the result}
Let $X_1,X_2,\ldots$ be independent and identically distributed (i.i.d.) random points in the $d$-dimensional unit ball with the following probability density function:
\begin{equation}\label{eq:def_f_beta}
f_{d,\beta}(x)= \frac{ \Gamma\left( \frac{d}{2} + \beta + 1 \right) }{ \pi^{ \frac{d}{2} } \Gamma\left( \beta+1 \right) } \left( 1-\left\| x \right\|^2 \right)^\beta, \qquad x\in\R^d, \;\; \|x\| <  1,
\end{equation}
where $\|x\| = (x_1^2+\ldots+x_d^2)^{1/2}$ is the Euclidean norm of the vector $x= (x_1,\ldots,x_d)\in\R^d$, and $\beta$ is a real parameter satisfying  $\beta>-1$. This so-called $d$-dimensional \textit{beta distribution} has been introduced in the works of Miles~\cite{miles} and Ruben and Miles~\cite{ruben_miles}. For example, the special case $\beta=0$ corresponds to the uniform distribution on the unit ball $\{\|x\|\leq 1\}$, whereas the weak limit of the beta distribution as $\beta\downarrow -1$ is the uniform distribution on the unit sphere $\{\|x\|=1\}$; see~\cite[Proof of Corollary~3.9]{beta_polytopes_temesvari}.  We always include the latter limit case into our results by allowing the value $\beta=-1$.

The convex hull $P_{n,d}^\beta :=[X_1,\ldots,X_n]$ is called the \textit{beta polytope} with parameter $\beta\geq -1$. The expected number of $k$-dimensional faces of the beta polytope has been determined exactly and explicitly in~\cite[Section~7.1]{kabluchko_formula}, which strongly relied on the results of~\cite{beta_polytopes}; see also~\cite{buchta_mueller,buchta_mueller_tichy,affentranger,affentranger_exact,AS92,beta_polytopes_temesvari,kabluchko_poisson_zero,kabluchko_angles,kabluchko_algorithm} for earlier results in this direction.
In order to state the corresponding formula we need to introduce some notation. Fix some  $\alpha > 0$ and consider the following analytic function:
\begin{equation}\label{eq:def_F}
F(x) = \int_{-\pi/2}^x (\cos y)^{\alpha} \dd y, \qquad x\in \C\backslash \left(\left(-\infty, -\frac \pi 2\right)\cup \left(+\frac \pi2,\infty\right)\right) =: \mathcal H.
\end{equation}
The integration is performed along any contour connecting $-\pi/2$ and $x$ and contained in the doubly slit complex plane $\mathcal H$. By Cauchy's theorem, the integral does not depend on the choice of the contour.  We choose the branch of $(\cos x)^\alpha = \eee^{\alpha \Log \cos x}$ that is equal to $1$ at $x=0$. Since $\cos x$ does not vanish in the doubly slit complex plane $\mathcal H$, the branch is well defined.
For $\nu,\kappa\in \C$ such that $\nu-\kappa\in \N_0$ define, following~\cite[Sections~5.1, 5.2]{kabluchko_formula},
\begin{align}
\rA[\nu,\kappa]
&:=
\frac{\alpha^{\nu - \kappa + 1}}{(\nu - \kappa)!} \cdot \frac {\Gamma(\nu+1)} {2\pi \ii} \int_{- \ii \infty}^{+\ii \infty} (\cos x)^{-\alpha \nu} (F(x))^{\nu-\kappa} \dd x, \qquad
\Re \kappa > 0,\label{eq:def_A_0}\\
\rB\{\nu, \kappa\}
&:=
\frac{\alpha^{\nu-\kappa}}{\Gamma(\kappa)(\nu-\kappa)!}\int_{-\pi/2}^{+\pi/2} (\cos x)^{\alpha \kappa}
(F(x))^{\nu-\kappa} \dd x,
\qquad
\Re \kappa > -\frac 1 \alpha.
\label{eq:def_B}
\end{align}
For $\nu, \kappa\in \C$ such that $\nu - \kappa \in \{-1,-2,\ldots\}$ we put $\rA[\nu,\kappa]:= \rB\{\nu, \kappa\} := 0$.  It will be shown elsewhere that for every fixed $\nu - \kappa \in \N_0$, the functions $\rB\{\nu,\kappa\}$ and $\rA[\nu,\kappa]$ can be extended to meromorphic functions of the complex variable $\nu$ (or $\kappa$). 
Note that~\cite{kabluchko_formula} also uses the notation $\lB\{\nu,\kappa\} = \rB\{\nu,\kappa\}\Gamma(\kappa)$ and
$\lA[\nu,\kappa] = \rA[\nu,\kappa]/\Gamma(\nu+1)$. We are now able to state our formulae for the expected face numbers of beta polytopes.

\begin{theorem}\label{theo:beta_poly_f_vector}
Let $d\geq 3$ and $n\in\N$ be such that $n\geq d+1$. Also, let $\beta\geq -1$ and define $\alpha:= 2\beta+d$. Then, for all $k\in \{1,\ldots,d\}$, the expected number of $(k-1)$-dimensional faces of $P_{n,d}^\beta$ is given by
\begin{align}
\lefteqn{\E f_{k-1} (P_{n,d}^\beta)}\notag\\
&=
\frac{2\cdot n!}{k!} \left(\frac{\Gamma(\frac{\alpha}{2})}{2\sqrt \pi \, \Gamma(\frac{\alpha+1}{2})}\right)^{n-k}
\sum_{\substack{s=0,2,4,\ldots \\ m:=d-s\geq k}}
\rB\{n,m\} \frac{\left(m+\frac 1\alpha\right)\Gamma(m)}{\Gamma\left(m+1+\frac 2\alpha\right)} \rA\left[m+\frac 2\alpha, k+\frac 2\alpha\right]\label{eq:beta_f_vect1}\\
&=
\frac{2\cdot n!}{k!} \left(\frac{\Gamma(\frac{\alpha}{2})}{2\sqrt \pi \, \Gamma(\frac{\alpha+1}{2})}\right)^{n-k}
\sum_{\substack{q=0,2,4,\ldots\\ d - q \geq k}} \rB\left\{n-\frac{q}{\alpha}, d - q - \frac q \alpha\right\}\rA\left[d - q - \frac q\alpha, k-\frac {q}{\alpha}\right]. \label{eq:beta_f_vect2}
\end{align}
Under the same conditions as above, the following complementary formulae hold:
\begin{align}
\lefteqn{\binom nk - \E f_{k-1} (P_{n,d}^\beta)}\notag\\
&=
\frac{2\cdot n!}{k!} \left(\frac{\Gamma(\frac{\alpha}{2})}{2\sqrt \pi \, \Gamma(\frac{\alpha+1}{2})}\right)^{n-k}
\sum_{\substack{s=2,4,6,\ldots \\ m:=d+s\leq n}}
\rB\{n,m\} \frac{\left(m+\frac 1\alpha\right)\Gamma(m)}{\Gamma\left(m+1+\frac 2\alpha\right)} \rA\left[m+\frac 2\alpha, k+\frac 2\alpha\right]
\label{eq:beta_f_vect1compl}\\
&=
\frac{2\cdot n!}{k!} \left(\frac{\Gamma(\frac{\alpha}{2})}{2\sqrt \pi \, \Gamma(\frac{\alpha+1}{2})}\right)^{n-k}
\sum_{\substack{q=2,4,6,\ldots\\ d + q \leq n}} \rB\left\{n+\frac{q}{\alpha}, d + q + \frac q \alpha\right\}\rA\left[d + q + \frac q\alpha, k+\frac {q}{\alpha}\right].
\label{eq:beta_f_vect2compl}
\end{align}
\end{theorem}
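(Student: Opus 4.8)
The starting point is that formula \eqref{eq:beta_f_vect1}, together with its complement \eqref{eq:beta_f_vect1compl}, is already available: \eqref{eq:beta_f_vect1} is proved in \cite[Section~7.1]{kabluchko_formula}, and \eqref{eq:beta_f_vect1compl} follows from it exactly as \eqref{eq:theo:spherical_polytope_f_vector1compl} followed from \eqref{eq:theo:spherical_polytope_f_vector1} in the conical case, i.e.\ via a beta analogue of the normalization identity \eqref{eq:identity_2A}. Thus the only genuinely new content is the pair of equivalences \eqref{eq:beta_f_vect1}$\Leftrightarrow$\eqref{eq:beta_f_vect2} and \eqref{eq:beta_f_vect1compl}$\Leftrightarrow$\eqref{eq:beta_f_vect2compl}, which I would prove by reducing them to two identities between the sums on the respective right-hand sides, the scalar prefactors $\tfrac{2n!}{k!}(\dots)^{n-k}$ being literally the same on both sides. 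The plan is to treat these sum-identities by the very same ``chemical reaction'' that proved Theorem~\ref{theo:identity_A_B_1}, now carried out with the integrals $\rA[\nu,\kappa]$ and $\rB\{\nu,\kappa\}$ in place of the numbers $A[n,k]$ and $B\{n,k\}$.

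For this I would first record the two recurrence relations that play the role of \eqref{eq:A_rec_citation} and \eqref{eq:B_rec_citation}. They should be obtainable directly from the defining integrals \eqref{eq:def_A_0} and \eqref{eq:def_B} by integration by parts, exploiting the defining property $F'(x)=(\cos x)^\alpha$ of the function in \eqref{eq:def_F}: differentiating a power of $F$ produces a factor $(\cos x)^\alpha$ that is absorbed into $(\cos x)^{\alpha\kappa}$ (resp.\ $(\cos x)^{-\alpha\nu}$), which is precisely the mechanism that shifts $\kappa$ and $\nu$. The resulting relations will involve shifts by $\tfrac2\alpha$ in the arguments and the Gamma prefactor $\tfrac{(m+\frac1\alpha)\Gamma(m)}{\Gamma(m+1+\frac2\alpha)}$ already visible in \eqref{eq:beta_f_vect1}, and they play the structural role of \eqref{eq:A_rec_citation} and \eqref{eq:B_rec_citation}.

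Granting these recurrences, the reaction proceeds as before. Writing $\tilde S^-(n,k,d)$ for the sum in \eqref{eq:beta_f_vect1}, I would apply the $\rA$-recurrence to split each summand, peel off the leading term, shift the summation index, recombine the two resulting sums, and then apply the $\rB$-recurrence; the outcome should be a single-step identity of the shape
\begin{equation*}
\tilde S^-(n,k,d)=\rB\{n,d\}\,\rA[d,k]+\tilde S^-\!\left(n-\tfrac2\alpha,\;k-\tfrac2\alpha,\;d-2-\tfrac2\alpha\right).
\end{equation*}
Iterating deposits the ``sediment'' terms $\rB\{n-\tfrac q\alpha,\,d-q-\tfrac q\alpha\}\,\rA[d-q-\tfrac q\alpha,\,k-\tfrac q\alpha]$ for $q=0,2,4,\dots$, which are exactly the summands of \eqref{eq:beta_f_vect2}, while the residual sum empties once $d-q<k$. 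The complementary equivalence \eqref{eq:beta_f_vect1compl}$\Leftrightarrow$\eqref{eq:beta_f_vect2compl} is handled by the mirror-image reaction in which the $\rB$-terms disintegrate first, just as \eqref{eq:identity_A_B_2} was treated after \eqref{eq:identity_A_B_1}.

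The delicate point — and the step I expect to cost the most work — is neither the overall combinatorial scheme, which is dictated by Theorem~\ref{theo:identity_A_B_1}, nor the integration by parts itself, but rather the bookkeeping forced by the fractional shifts. In the conical case the ``middle index'' stayed fixed at $d$ and all coefficients were the clean squares $(j-1)^2$; here the shared argument of $\rB$ and $\rA$ drifts by $2+\tfrac2\alpha$ at every step and the coefficient is the Gamma-ratio $\tfrac{(m+\frac1\alpha)\Gamma(m)}{\Gamma(m+1+\frac2\alpha)}$. One must therefore verify that, under the simultaneous shifts $n\mapsto n-\tfrac2\alpha$, $k\mapsto k-\tfrac2\alpha$, $d\mapsto d-2-\tfrac2\alpha$, the prefactor produced by the $\rA$-recurrence telescopes exactly against the one produced by the $\rB$-recurrence, leaving the bare product $\rB\{\cdot\}\,\rA[\cdot]$ as sediment. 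Checking this cancellation of Gamma factors — together with pinning down the correct convergence and domain conditions on $\nu,\kappa$, so that the boundary terms in the integration by parts vanish and $\rA,\rB$ remain in their domains of definition throughout the iteration — is where the real care will be required.
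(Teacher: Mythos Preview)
Your proposal is correct and follows essentially the same approach as the paper: formula~\eqref{eq:beta_f_vect1} is cited from~\cite{kabluchko_formula}, \eqref{eq:beta_f_vect1compl} follows from it via Proposition~\ref{prop:relations_A_B} (the beta analogue of~\eqref{eq:identity_2A} you anticipate), and the equivalences \eqref{eq:beta_f_vect1}$\Leftrightarrow$\eqref{eq:beta_f_vect2} and \eqref{eq:beta_f_vect1compl}$\Leftrightarrow$\eqref{eq:beta_f_vect2compl} are proved in Theorem~\ref{theo:identity_main} by exactly the ``chemical reaction'' you describe, yielding precisely your single-step identity $S_d(n,k)=\rB\{n,d\}\rA[d,k]+S_{d-2-\frac2\alpha}(n-\tfrac2\alpha,k-\tfrac2\alpha)$. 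The only minor deviations are that the paper quotes the needed recurrences~\eqref{eq:b_relation},~\eqref{eq:a_0_relation} from~\cite{kabluchko_formula} rather than rederiving them by integration by parts, and handles your domain worries by treating all identities as equalities of meromorphic functions in one complex variable with $n-d$, $d-k$ fixed.
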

\begin{remark}
If $d=2$ and $\beta=-1$, then $\alpha=0$ and the above formulae (which involve the value $2/\alpha$) do not apply. However, this case is not interesting because $P_{n,2}^{-1}$ is a polygon with $n$ vertices. If $d=2$ and $\beta>-1$, then the formulae hold true without changes in the proof. 
\end{remark}
Formula~\eqref{eq:beta_f_vect1} has been obtained in~\cite[Theorem~7.1]{kabluchko_formula}. The equivalence between~\eqref{eq:beta_f_vect1} and~\eqref{eq:beta_f_vect2}, as well as the equivalence between~\eqref{eq:beta_f_vect1compl} and~\eqref{eq:beta_f_vect2compl} will be established in Theorem~\ref{theo:identity_main}. Finally, the equivalence between~\eqref{eq:beta_f_vect1} and~\eqref{eq:beta_f_vect1compl} is a consequence of the following proposition which we proved in Proposition~5.9 and Remark~5.10 of~\cite{kabluchko_angles}. 
\begin{proposition}\label{prop:relations_A_B}
Let $\alpha>0$.  If $n,k\in \C$ are complex variables such that $n-k\in \N$ is arbitrary but fixed (observe that $k=n$ is not allowed), then the following equalities of meromorphic functions in $n$ (or $k$) hold:
\begin{align}
\sum_{\substack{m\in \{k,\ldots,n\}\\ n-m \text{ is even}}}
\rB\{n,m\} \frac{\left(m + \frac 1\alpha\right)\Gamma(m)}{\Gamma\left(m+1+\frac 2\alpha\right)} \rA\left[m + \frac 2\alpha, k + \frac 2\alpha\right]
&=
\frac{1}{2 (n-k)!}\left(\frac{2\sqrt \pi \Gamma\left(\frac{\alpha+1}{2}\right)}{\Gamma\left(\frac{\alpha}{2}\right)}\right)^{n-k},
\label{eq:full_sum_1}\\
\sum_{\substack{m\in \{k,\ldots,n\}\\ n-m \text{ is odd}}}
\rB\{n,m\} \frac{\left(m + \frac 1\alpha\right)\Gamma(m)}{\Gamma\left(m+1+\frac 2\alpha\right)} \rA\left[m + \frac 2\alpha, k + \frac 2\alpha\right]
&=
\frac{1}{2 (n-k)!}\left(\frac{2\sqrt \pi \Gamma\left(\frac{\alpha+1}{2}\right)}{\Gamma\left(\frac{\alpha}{2}\right)}\right)^{n-k}.
 \label{eq:full_sum_2}
\end{align}
\end{proposition}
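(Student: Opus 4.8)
The plan is to prove both identities at once by inserting the integral representations~\eqref{eq:def_A_0} and~\eqref{eq:def_B} directly into the summand and collapsing the entire (finite) sum to a single double integral. Since for fixed $n-k\in\N$ both sides are meromorphic in $n$, it suffices to argue for real $n,k$ in the region where the defining integrals converge and then invoke analytic continuation. Write $N:=n-k\in\N$ and reindex by $j:=n-m$, so $m=n-j$, $m-k=N-j$, and the parity of $n-m$ is that of $j$. First I would check that the Gamma-factors telescope: the factor $\Gamma(m)$ cancels the $1/\Gamma(m)$ in $\rB\{n,m\}$, the factor $1/\Gamma(m+1+\frac2\alpha)$ cancels the $\Gamma(m+\frac2\alpha+1)$ in $\rA[m+\frac2\alpha,k+\frac2\alpha]$, and the powers of $\alpha$ combine to $\alpha^{N+1}$. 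The result is that the $m$-th summand equals $(n-j+\frac1\alpha)\frac{\alpha^{N+1}}{j!\,(N-j)!}\cdot\frac{1}{2\pi\ii}\int_{-\pi/2}^{\pi/2}\int_{-\ii\infty}^{+\ii\infty}(\cos x)^{\alpha(n-j)}(\cos y)^{-\alpha(n-j)-2}F(x)^{j}F(y)^{N-j}\dd y\,\dd x$.

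The decisive point is that this integrand separates into a function of $x$ times a function of $y$. Setting $u:=(\cos y/\cos x)^{\alpha}F(x)$, $v:=F(y)$, and $\mathcal I[\Phi]:=\frac{\alpha^{N+1}}{2\pi\ii}\int_{-\pi/2}^{\pi/2}\int_{-\ii\infty}^{+\ii\infty}(\cos x)^{\alpha n}(\cos y)^{-\alpha n-2}\Phi(u,v)\dd y\,\dd x$, the $m$-th summand is $(n-j+\frac1\alpha)\frac{1}{j!(N-j)!}\mathcal I[u^{j}v^{N-j}]$, and every monomial $u^{i}v^{N-i}$ integrates as a product of a $\rB$-type integral over $[-\frac\pi2,\frac\pi2]$ and an $\rA$-type integral over the imaginary axis. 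As the sum is finite I may freely sum inside $\mathcal I$. Using $\sum_{j}\frac{u^{j}v^{N-j}}{j!(N-j)!}=\frac{(u+v)^{N}}{N!}$ and $\sum_{j}\frac{j\,u^{j}v^{N-j}}{j!(N-j)!}=\frac{u(u+v)^{N-1}}{(N-1)!}$, together with $n+\frac1\alpha-N=k+\frac1\alpha$, the unrestricted sum collapses to $\mathcal I[P]$ with $P(u,v):=\frac{(u+v)^{N-1}}{N!}\big[(k+\tfrac1\alpha)u+(n+\tfrac1\alpha)v\big]$. The even and odd sums are then $\frac12\big(\mathcal I[P(u,v)]\pm\mathcal I[P(-u,v)]\big)$, by the usual device of replacing $u$ by $\pm u$.

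It remains to evaluate these integrals, and this is where I expect the real work to lie. The key tool is the functional equation $F(-x)=F(\frac\pi2)-F(x)$, immediate from~\eqref{eq:def_F} and the evenness of $\cos$; on the imaginary axis it reads $F(-\ii t)=F(\frac\pi2)-F(\ii t)$. This symmetry lets me fold the $\rA$-contour and reduce every occurrence of $F$ to the single boundary value $L:=F(\frac\pi2)=\int_{-\pi/2}^{\pi/2}(\cos y)^{\alpha}\dd y=\sqrt\pi\,\Gamma(\frac{\alpha+1}2)/\Gamma(\frac\alpha2+1)$; the elementary contour integral $\frac{1}{2\pi\ii}\int_{-\ii\infty}^{+\ii\infty}(\cos y)^{-s}\dd y=\Gamma(\frac s2)/(2\sqrt\pi\,\Gamma(\frac{s+1}2))$ handles the constant terms. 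I expect the unrestricted sum $\mathcal I[P]$ to evaluate to $\frac{\alpha^{N}}{N!}L^{N}$ and, crucially, the alternating contribution $\mathcal I[P(-u,v)]$ to vanish; the vanishing of the latter is exactly the statement that the two parity classes give equal sums, and it forces both the even and the odd sum to equal $\frac{\alpha^{N}}{2\,N!}L^{N}$.

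Finally I would match this with the stated right-hand side via $\frac{2\sqrt\pi\,\Gamma(\frac{\alpha+1}2)}{\Gamma(\frac\alpha2)}=\alpha L$, which follows from $\Gamma(\frac\alpha2)=\frac2\alpha\Gamma(\frac\alpha2+1)$; hence $\frac{1}{2(n-k)!}\big(\frac{2\sqrt\pi\,\Gamma(\frac{\alpha+1}2)}{\Gamma(\frac\alpha2)}\big)^{n-k}=\frac{\alpha^{N}}{2\,N!}L^{N}$, as required. The single genuine obstacle is the closed-form evaluation of the coupled $x$- and $y$-integrals after summation, and above all the proof that $\mathcal I[P(-u,v)]=0$; everything else is bookkeeping with the binomial theorem and Gamma functions. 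As a fallback I would mirror the ``chemical reaction'' telescoping of Theorem~\ref{theo:identity_A_B_1}: establish the beta-analogues of the recurrences~\eqref{eq:A_rec_citation}--\eqref{eq:B_rec_citation} for $\rA$ and $\rB$, run the same index-shifting argument to decrease $N=n-k$, and close an induction on a small base case, which would sidestep the direct evaluation of the alternating integral entirely.
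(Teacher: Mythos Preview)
First, a point of comparison: this paper does \emph{not} prove Proposition~\ref{prop:relations_A_B}. It is quoted from Proposition~5.9 and Remark~5.10 of~\cite{kabluchko_angles} and used as input; so there is no ``paper's own proof'' here to match your argument against. Your proposal is therefore an attempt at an independent proof.

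Your bookkeeping is correct through the reduction to the bilinear functional $\mathcal I$: the Gamma cancellations, the collection of powers of $\alpha$, the factoring of the double integral, and the binomial collapse to $P(u,v)=\frac{(u+v)^{N-1}}{N!}\big[(k+\tfrac1\alpha)u+(n+\tfrac1\alpha)v\big]$ are all right, as is the identification $\alpha L=\tfrac{2\sqrt\pi\,\Gamma((\alpha+1)/2)}{\Gamma(\alpha/2)}$. The decomposition of the even and odd sums as $\tfrac12\big(\mathcal I[P(u,v)]\pm\mathcal I[P(-u,v)]\big)$ is also fine as pure algebra.

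The gap is exactly where you flag it, and it is real. Showing $\mathcal I[P]=\tfrac{\alpha^N}{N!}L^N$ together with $\mathcal I[P(-u,v)]=0$ is \emph{equivalent} to the proposition (sum and difference of~\eqref{eq:full_sum_1} and~\eqref{eq:full_sum_2}); you have rewritten the target, not reduced it. The tool you propose, the reflection $F(-x)=L-F(x)$, does not implement $u\mapsto -u$: under $x\mapsto -x$ one has $\cos x$ unchanged and $F(x)\mapsto L-F(x)$, hence
\[
u=(\cos y/\cos x)^{\alpha}F(x)\ \longmapsto\ (\cos y/\cos x)^{\alpha}L-u,
\]
an affine shift whose ``constant'' term depends on both integration variables, not a sign flip. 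The $y$-reflection gives $v\mapsto L-v$ with $u$ fixed, which is a genuine symmetry of $\mathcal I$ but again not $u\mapsto -u$. So the vanishing of $\mathcal I[P(-u,v)]$ does not follow from the symmetry you cite, and the ``elementary contour integral'' you mention only handles the degree-zero piece in $F(y)$, not the mixed terms appearing in $P$. Without an additional idea (an integration-by-parts identity linking the $\rB$- and $\rA$-integrals, or a change of variables that couples $x$ and $y$), this route stalls.

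Your fallback is more solid and, in spirit, is what this paper actually does elsewhere: the recurrences you would need are precisely~\eqref{eq:b_relation} and~\eqref{eq:a_0_relation} (already in the paper, no need to ``establish'' them), and the same disintegrate--regroup--rejoin mechanism used in Theorems~\ref{theo:identity_A_B_1} and~\ref{theo:identity_main} can be set up to shift $(n,k)\mapsto(n-\tfrac2\alpha-2,\,k-\tfrac2\alpha)$ or similar, reducing $n-k$ and closing an induction on the base case $n-k=1$. That is a viable plan; the direct double-integral evaluation, as sketched, is not yet one.
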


\subsection{Main relation for beta polytopes}
The quantities $\rB$ and $\rA$ satisfy the following relations, see Equations~(5.17) and~(5.18) of~\cite{kabluchko_formula}:
\begin{align}
&\rB\{\nu,\kappa+2\} + \rB\{\nu,\kappa\}
=
\frac{\left(\kappa - \frac 1\alpha\right) \Gamma\left(\kappa - \frac 2\alpha\right)} {\Gamma(\kappa+1)} \rB \left\{\nu - \frac 2\alpha, \kappa - \frac 2 \alpha\right\},\label{eq:b_relation}\\
&\rA[\nu-2,\kappa] + \rA[\nu,\kappa]
=
\frac{\left(\nu+\frac 1 \alpha\right)\Gamma(\nu)}{\Gamma\left(\nu+\frac 2 \alpha + 1\right)} \rA \left[\nu + \frac 2\alpha, \kappa + \frac 2 \alpha\right]. \label{eq:a_0_relation}
\end{align}
These identities can be viewed as equalities between meromorphic functions, where $\nu,\kappa\in \C$ are complex variables such that $\nu-\kappa\in \N_0$ is fixed.
The next theorem is the main result of this section.

\begin{theorem}\label{theo:identity_main}
Fix $\alpha>0$.
Let $n,d,k\in \C$ be such that $n-d\in \N_0$ and $d-k\in \N_0$ are arbitrary but fixed. Then, the following equalities of meromorphic functions in the complex variable $k$ (or $n$) hold:
\begin{multline}\label{eq:main_1}
\sum_{\substack{s=0,2,4,\ldots \\ m:=d-s\geq k}}
\rB\{n,m\} \frac{\left(m+\frac 1\alpha\right)\Gamma(m)}{\Gamma\left(m+1+\frac 2\alpha\right)} \rA\left[m+\frac 2\alpha, k+\frac 2\alpha\right]
\\=
\sum_{\substack{q=0,2,4,\ldots\\ d - q \geq k}} \rB\left\{n-\frac{q}{\alpha}, d - q - \frac q \alpha\right\}\rA\left[d - q - \frac q\alpha, k-\frac {q}{\alpha}\right],
\end{multline}
\begin{multline}\label{eq:main_2}
\sum_{\substack{s=2,4,6,\ldots \\ m:=d+s\leq n}}
\rB\{n,m\} \frac{\left(m+\frac 1\alpha\right)\Gamma(m)}{\Gamma\left(m+1+\frac 2\alpha\right)} \rA\left[m+\frac 2\alpha, k+\frac 2\alpha\right]
\\=
\sum_{\substack{q=2,4,6,\ldots\\ d + q \leq n}} \rB\left\{n+\frac{q}{\alpha}, d  + q  +\frac q \alpha\right\}\rA\left[d + q + \frac q\alpha, k + \frac {q}{\alpha}\right].
\end{multline}
\end{theorem}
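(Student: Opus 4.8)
The plan is to reproduce the telescoping ``chemical reaction'' of Theorem~\ref{theo:identity_A_B_1}, with the additive recurrences~\eqref{eq:a_0_relation} and~\eqref{eq:b_relation} now taking over the roles played there by the difference recurrences~\eqref{eq:A_rec_citation} and~\eqref{eq:B_rec_citation}. Denote by $S^-(n,d,k)$ and $S^+(n,d,k)$ the left-hand sides of~\eqref{eq:main_1} and~\eqref{eq:main_2}, and abbreviate $c_\alpha(\nu):=\frac{\left(\nu+\frac 1\alpha\right)\Gamma(\nu)}{\Gamma\left(\nu+1+\frac 2\alpha\right)}$, so that every summand has the shape $\rB\{\cdot,m\}\, c_\alpha(m)\, \rA\left[m+\frac 2\alpha,\,k+\frac 2\alpha\right]$. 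The single algebraic fact driving the whole argument is that $c_\alpha(m)$ is at once the coefficient on the right of~\eqref{eq:a_0_relation} at $\nu=m$ and the coefficient on the right of~\eqref{eq:b_relation} at $\kappa=m+\frac 2\alpha$, i.e.\ $c_\alpha(m)=\frac{\left(\left(m+\frac 2\alpha\right)-\frac 1\alpha\right)\Gamma\left(\left(m+\frac 2\alpha\right)-\frac 2\alpha\right)}{\Gamma\left(\left(m+\frac 2\alpha\right)+1\right)}$. This dual reading lets me absorb the coefficient into whichever of $\rA$ or $\rB$ is convenient.

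For~\eqref{eq:main_1} I would run the reaction downward, exactly as in Theorem~\ref{theo:identity_A_B_1}. First use~\eqref{eq:a_0_relation} to replace $c_\alpha(m)\rA\left[m+\frac 2\alpha,k+\frac 2\alpha\right]$ by $\rA[m-2,k]+\rA[m,k]$, giving $S^-(n,d,k)=\sum_m \rB\{n,m\}\bigl(\rA[m-2,k]+\rA[m,k]\bigr)$ over $m=d,d-2,\ldots\geq k$. Splitting the two pieces, shifting the summation index in the piece containing $\rA[m-2,k]$ and recombining yields the top term $\rB\{n,d\}\rA[d,k]$ (the $q=0$ summand on the right of~\eqref{eq:main_1}), an interior sum with paired coefficients $\rB\{n,m\}+\rB\{n,m+2\}$, and one bottom term whose $\rA$-factor is $\rA[k-2,k]$ if $d-k$ is even and $\rA[k-1,k]$ if $d-k$ is odd; in either case the difference of its arguments lies in $\{-1,-2,\ldots\}$, so it vanishes by the standing convention. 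Applying~\eqref{eq:b_relation} to the paired coefficients and using the dual reading of $c_\alpha$ identifies the interior sum as $S^-\!\left(n-\frac 2\alpha,\,d-2-\frac 2\alpha,\,k-\frac 2\alpha\right)$. Iterating this one-step identity deposits the sediments $\rB\left\{n-\frac q\alpha,d-q-\frac q\alpha\right\}\rA\left[d-q-\frac q\alpha,k-\frac q\alpha\right]$ for $q=0,2,4,\ldots$, and stops once $d-2r<k$ empties the remaining sum; collecting the sediments gives the right-hand side of~\eqref{eq:main_1}.

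For the complementary identity~\eqref{eq:main_2} the reaction must instead be run upward, and this is where the dual reading becomes indispensable: disintegrating $\rA$ here would leave a non-vanishing boundary at the top, so I would read $c_\alpha(m)$ as the $\rB$-coefficient and apply~\eqref{eq:b_relation} in the form $c_\alpha(m)\rB\{n,m\}=\rB\left\{n+\frac 2\alpha,m+2+\frac 2\alpha\right\}+\rB\left\{n+\frac 2\alpha,m+\frac 2\alpha\right\}$. Writing $\tilde n:=n+\frac 2\alpha$ and $\mu:=m+\frac 2\alpha$, each summand becomes $\bigl(\rB\{\tilde n,\mu+2\}+\rB\{\tilde n,\mu\}\bigr)\rA\left[\mu,k+\frac 2\alpha\right]$. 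Splitting, shifting the summation index in the piece containing $\rB\{\tilde n,\mu+2\}$ and recombining now produces the bottom term $\rB\left\{n+\frac 2\alpha,d+2+\frac 2\alpha\right\}\rA\left[d+2+\frac 2\alpha,k+\frac 2\alpha\right]$ (the $q=2$ summand on the right of~\eqref{eq:main_2}), an interior sum of paired $\rA$-factors $\rA\left[\mu-2,k+\frac 2\alpha\right]+\rA\left[\mu,k+\frac 2\alpha\right]$, and one top term carrying the factor $\rB\{\tilde n,\tilde n+2\}$ (if $n-d$ is even) or $\rB\{\tilde n,\tilde n+1\}$ (if $n-d$ is odd), which vanishes by the convention. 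Applying~\eqref{eq:a_0_relation} to the paired $\rA$-factors identifies the interior sum as $S^+\!\left(n+\frac 2\alpha,\,d+2+\frac 2\alpha,\,k+\frac 2\alpha\right)$, and iterating deposits the sediments for $q=2,4,6,\ldots$ until $d+2r>n$ empties the sum, producing the right-hand side of~\eqref{eq:main_2}.

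All of these manipulations are identities between meromorphic functions of $k$ (or $n$) with $n-d,\,d-k\in\N_0$ fixed, so the recurrences may be applied termwise and every sum is finite. The step I expect to demand the most care is the bookkeeping at the two ends of each sum: one must check, in both parities of $d-k$ (respectively $n-d$), that exactly one boundary term survives as the next sediment while the other is annihilated by the convention $\rA[\nu,\kappa]=\rB\{\nu,\kappa\}=0$ for $\nu-\kappa\in\{-1,-2,\ldots\}$, and that the recombined interior sum reproduces $S^{\mp}$ with the correct threefold shift of $(n,d,k)$. The only genuinely non-routine decision is, for each of the two identities, to pick the recurrence whose application drives the unwanted boundary term into this vanishing regime.
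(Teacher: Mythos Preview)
Your proposal is correct and follows essentially the same route as the paper: for~\eqref{eq:main_1} the paper also disintegrates $\rA$ via~\eqref{eq:a_0_relation}, peels off the sediment $\rB\{n,d\}\rA[d,k]$, and then rejoins using~\eqref{eq:b_relation} to obtain $S_d(n,k)=\rB\{n,d\}\rA[d,k]+S_{d-2-\frac 2\alpha}\!\left(n-\frac 2\alpha,k-\frac 2\alpha\right)$; for~\eqref{eq:main_2} the paper does exactly what you propose (``apply the recurrence relations in the reverse order''), i.e.\ disintegrate $\rB$ first. Your explicit remark that $c_\alpha(m)$ admits a dual reading as the coefficient in both~\eqref{eq:a_0_relation} and~\eqref{eq:b_relation} is the mechanism that makes the regrouping close up, and your handling of the boundary terms (vanishing via $\rA[\nu,\kappa]=\rB\{\nu,\kappa\}=0$ for $\nu-\kappa\in\{-1,-2,\ldots\}$) matches what the paper uses implicitly.
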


\begin{proof}
Let us prove~\eqref{eq:main_1}. We are interested in the following sum:
$$
S_d(n,k):=\sum_{\substack{s=0,2,4,\ldots \\ m:=d-s\geq k}}
\rB\{n,m\} \frac{\left(m+\frac 1\alpha\right)\Gamma(m)}{\Gamma\left(m+1+\frac 2\alpha\right)} \rA\left[m+\frac 2\alpha, k+\frac 2\alpha\right].
$$
Applying~\eqref{eq:a_0_relation}, regrouping the terms and applying~\eqref{eq:b_relation} yields
\begin{align*}
S_d(n,k)
&=
\sum_{\substack{s=0,2,4,\ldots \\ m:=d-s\geq k}}
\rB\{n,m\} \left(\rA[m-2,k] + \rA[m,k]\right)\\
&=
\rB\{n,d\} \rA[d,k]
+
\sum_{\substack{s=0,2,4,\ldots \\ m:=d-s\geq k+2}}
\rB\{n,m\} \rA[m-2,k ]
+
\sum_{\substack{s=2,4,\ldots \\ m:=d-s\geq k}}
\rB\{n,m\} \rA[m,k]\\
&=
\rB\{n,d\} \rA[d,k]
+
\sum_{\substack{s=0,2,4,\ldots \\ m:=d-s\geq k+2}}
\rB\{n,m\} \rA[m-2,k ]
+
\sum_{\substack{s=0,2,4,\ldots \\ m:=d-s\geq k+2}}
\rB\{n,m-2\} \rA[m-2,k]\\
&=
\rB\{n,d\} \rA[d,k]
+
\sum_{\substack{s=0,2,4,\ldots \\ m:=d-s\geq k+2}}
\left(\rB\{n,m\} + \rB\{n,m-2\}\right) \rA[m-2,k]\\
&=
\rB\{n,d\} \rA[d,k]
+
\sum_{\substack{s=0,2,4,\ldots \\ m:=d-s\geq k+2}}
\rB \left\{n - \frac 2\alpha, m - 2 - \frac 2 \alpha\right\}
\frac{\left(m - 2 - \frac 1\alpha\right) \Gamma\left(m - 2 - \frac 2\alpha\right)} {\Gamma(m-1)}
\rA[m-2,k]\\
&=
\rB\{n,d\} \rA[d,k]
+
\sum_{\substack{s=0,2,4,\ldots \\ m':=d - 2 - \frac 2 \alpha - s \geq k - \frac 2\alpha }}
\rB \left\{n - \frac 2\alpha, m'\right\}
\frac{\left(m' +  \frac 1\alpha\right) \Gamma\left(m'\right)} {\Gamma(m' + 1 + \frac 2\alpha)}
\rA\left[m' + \frac 2\alpha,k\right]\\
&=
\rB\{n,d\} \rA[d,k]
+
S_{d - 2 - \frac 2 \alpha}\left(n-\frac 2\alpha, k-\frac 2\alpha\right).
\end{align*}
Iterating this identity $r$ times, we arrive at
$$
S_d(n,k) = \sum_{\substack{q=0,2,4,\ldots, 2r-2\\ d - q \geq k}} \rB\left\{n - \frac{q}{\alpha}, d - q - \frac q \alpha\right\}\rA\left[d - q - \frac q\alpha, k - \frac {q}{\alpha}\right] + S_{d-2r - \frac {2r}\alpha} \left(n - \frac {2r}\alpha, d - \frac {2r}\alpha \right).
$$
For sufficiently large $r$, the last sum becomes empty and the proof of~\eqref{eq:main_1} is complete. The proof of~\eqref{eq:main_2} is very similar, but we apply the recurrence relations in the reverse order.
\end{proof}

\begin{corollary}\label{cor:main}
Let $\alpha>0$.
If $n,k\in \C$ are complex variables such that $n-k\in \N$ is arbitrary but fixed (observe that $k=n$ is not allowed), then the following equalities of meromorphic functions in $n$ (or $k$) hold:
\begin{align*}
&\sum_{\substack{q=0,2,4,\ldots\\ n - q \geq k}} \rB\left\{n-\frac{q}{\alpha}, n - q - \frac q \alpha\right\}\rA\left[n - q - \frac q\alpha, k-\frac {q}{\alpha}\right] = \frac{1}{2 (n-k)!}\left(\frac{2\sqrt \pi \Gamma\left(\frac{\alpha+1}{2}\right)}{\Gamma\left(\frac{\alpha}{2}\right)}\right)^{n-k},\\
&\sum_{\substack{q=0,2,4,\ldots\\ n  - q \geq k}} \rB\left\{n-\frac{q}{\alpha}, n - 1- q - \frac q \alpha\right\}\rA\left[n - 1 - q - \frac q\alpha, k-\frac {q}{\alpha}\right]
=
\frac{1}{2 (n-k)!}\left(\frac{2\sqrt \pi \Gamma\left(\frac{\alpha+1}{2}\right)}{\Gamma\left(\frac{\alpha}{2}\right)}\right)^{n-k}.
\end{align*}
\end{corollary}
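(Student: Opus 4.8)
The plan is to deduce both identities directly from Theorem~\ref{theo:identity_main} by specializing the free parameter $d$ in~\eqref{eq:main_1}, and then to evaluate the resulting left-hand sides using Proposition~\ref{prop:relations_A_B}. The key observation is that the left-hand side of~\eqref{eq:main_1} is a sum over $m = d, d-2, d-4, \ldots$ subject to $m \geq k$, i.e.\ over those $m \in \{k,\ldots,n\}$ with $d - m$ even, equivalently with $n - m \equiv n - d \pmod 2$. Thus, by choosing $d$ of a prescribed parity relative to $n$, the left-hand side of~\eqref{eq:main_1} becomes exactly one of the two parity-restricted sums appearing in Proposition~\ref{prop:relations_A_B}, whose value is already known.

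For the first identity I would set $d := n$. This is admissible, since then $n - d = 0 \in \N_0$ and $d - k = n - k \in \N_0$, so the hypotheses of Theorem~\ref{theo:identity_main} are met. With $d = n$ the right-hand side of~\eqref{eq:main_1} is literally the sum on the left of the corollary's first identity, while the left-hand side of~\eqref{eq:main_1} becomes $\sum_{m\in\{k,\ldots,n\},\, n-m \text{ even}} \rB\{n,m\}\, \frac{(m+\frac1\alpha)\Gamma(m)}{\Gamma(m+1+\frac2\alpha)}\, \rA[m+\frac2\alpha, k+\frac2\alpha]$, because the index $m = n - s$ with $s$ even ranges precisely over the even values of $n - m$. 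By~\eqref{eq:full_sum_1} this equals $\frac{1}{2(n-k)!}\bigl(\frac{2\sqrt\pi\,\Gamma(\frac{\alpha+1}{2})}{\Gamma(\frac\alpha2)}\bigr)^{n-k}$, and chaining the two sides of~\eqref{eq:main_1} gives the first identity.

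For the second identity I would instead set $d := n-1$; again $n - d = 1 \in \N_0$ and $d - k = n - 1 - k \in \N_0$ (using $n - k \geq 1$), so Theorem~\ref{theo:identity_main} applies. Now the index $m = (n-1) - s$ with $s$ even runs over the odd values of $n - m$, so the left-hand side of~\eqref{eq:main_1} is exactly the odd-parity sum, which by~\eqref{eq:full_sum_2} equals the same constant $\frac{1}{2(n-k)!}\bigl(\frac{2\sqrt\pi\,\Gamma(\frac{\alpha+1}{2})}{\Gamma(\frac\alpha2)}\bigr)^{n-k}$. It remains only to match the right-hand side of~\eqref{eq:main_1} (with $d = n-1$) against the left-hand side of the corollary's second identity: the summands $\rB\{n-\frac q\alpha,\, n-1-q-\frac q\alpha\}\,\rA[n-1-q-\frac q\alpha,\, k-\frac q\alpha]$ coincide term by term, and the only possible discrepancy lies in the summation range, since~\eqref{eq:main_1} imposes $(n-1)-q \geq k$ whereas the corollary imposes $n - q \geq k$.

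The main (and essentially only) obstacle is this boundary discrepancy, which I would settle using the convention $\rA[\nu,\kappa] = 0$ for $\nu - \kappa \in \{-1,-2,\ldots\}$. The two ranges can differ by at most the single even value $q = n - k$, and such a difference arises exactly when $n - k$ is even. For that extra term the relevant factor is $\rA[n-1-q-\frac q\alpha,\, k-\frac q\alpha]$ with $q = n-k$, whose two arguments differ by $(n-1-q-\frac q\alpha) - (k-\frac q\alpha) = n-1-q-k = -1$, so the term vanishes by the convention. When $n - k$ is odd the two ranges coincide outright. In either case the corollary's left-hand side equals the right-hand side of~\eqref{eq:main_1} with $d = n-1$, which completes the proof.
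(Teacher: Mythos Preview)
Your proof is correct and follows exactly the paper's approach: specialize~\eqref{eq:main_1} to $d=n$ and $d=n-1$ and then invoke Proposition~\ref{prop:relations_A_B}. Your explicit treatment of the boundary term $q=n-k$ in the second identity (showing it vanishes via the convention $\rA[\nu,\kappa]=0$ for $\nu-\kappa=-1$) is a detail the paper leaves implicit.
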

\begin{proof}
Use Equation~\eqref{eq:main_1} of Theorem~\ref{theo:identity_main} with $d=n$ and $d=n-1$ and combine the result with Proposition~\ref{prop:relations_A_B}.
\end{proof}
Notice the following alternative way of proving~\eqref{eq:main_2}: it is a consequence of~\eqref{eq:main_1} together with Proposition~\ref{prop:relations_A_B} and Corollary~\ref{cor:main}.

\section{Face numbers of  beta' polytopes}

\subsection{Statement of the result}
To define beta' polytopes, let $X_1,X_2,\ldots$ be i.i.d.\ random points in $\R^d$ with the Lebesgue density function
\begin{equation}\label{eq:def_f_beta_prime}
\tilde{f}_{d,\beta}(x)= \frac{ \Gamma\left( \beta \right) }{\pi^{ \frac{d}{2} } \Gamma\left( \beta - \frac{d}{2} \right) } \left( 1+\left\| x \right\|^2 \right)^{-\beta},
\qquad x\in \R^d,
\end{equation}
where $\beta>d/2$ is a real parameter. 
This $d$-dimensional \textit{beta' distribution} appeared in the works of Miles~\cite{miles} and Ruben and Miles~\cite{ruben_miles}.  The convex hull $\tilde P_{n,d}^\beta :=[X_1,\ldots,X_n]$ is called the \textit{beta' polytope}. Several models of stochastic geometry such as the typical Poisson-Voronoi cell (in the Euclidean space or on the sphere), the zero cell of the Poisson hyperplane tessellation (again, both Euclidean and spherical), and the random convex cones in the half-space studied in Section~\ref{sec:random_convex_cones_half_space} reduce to beta' polytopes with various parameters $\beta$ or to their limits as $n\to\infty$ called Poisson polyhedra; see~\cite{convex_hull_sphere,beta_polytopes,kabluchko_poisson_zero,kabluchko_thaele_voronoi_sphere,kabluchko_thaele_great_hypersphere}.  The expected number of $k$-dimensional faces of the beta' polytope has been determined exactly and explicitly in~\cite[Section~7.2]{kabluchko_formula} relying on~\cite{beta_polytopes}. To state the corresponding formula, we need to  introduce some notation.
For  $\alpha >0$ we consider the analytic function
\begin{equation}\label{eq:def_F_tilde}
\tilde F(x) = \int_{-\pi/2}^x (\cos y)^{\alpha-1} \dd y,
\qquad
x\in \C\backslash \left(\left(-\infty, -\frac \pi 2\right)\cup \left(+\frac \pi2,\infty\right)\right) =: \mathcal H,
\end{equation}
which is the same as $F$ defined in~\eqref{eq:def_F} with $\alpha$ replaced by $\alpha-1$.
For $\nu,\kappa\in \C$ such that $\nu-\kappa\in \N_0$ we define
\begin{align}
\mtB\{\nu, \kappa\}
&:=
\frac{\alpha^{\nu-\kappa}}{\Gamma(\kappa)(\nu-\kappa)!}\int_{-\pi/2}^{+\pi/2} (\cos x)^{\alpha \kappa-1} (\tilde F(x))^{\nu-\kappa} \dd x,
\qquad \Re \kappa > 0,\label{eq:def_B_tilde}\\
\mtA[\nu,\kappa]
&:=
\frac{\alpha^{\nu - \kappa + 1}}{(\nu - \kappa)!} \cdot \frac {\Gamma(\nu+1)} {2\pi \ii} \int_{- \ii \infty}^{+\ii \infty} (\cos x)^{-\alpha \nu-1} (\tilde F(x))^{\nu-\kappa} \dd x. \label{eq:def_A_0_tilde1}
\end{align}
For $\nu, \kappa\in \C$ such that $\nu - \kappa \in \{-1,-2,\ldots\}$ we put $\mtA[\nu,\kappa]:= \mtB\{\nu, \kappa\} := 0$. 
As in the beta case, it is possible to show that for fixed $\nu-\kappa\in \N_0$, the functions $\mtB\{\nu, \kappa\}$ and $\mtA[\nu,\kappa]$ are meromorphic in the complex variable $\nu$ (or $\kappa$). 
Note that~\cite{kabluchko_formula} also makes use of the notation
$\mB\{\nu,\kappa\} =  \mtB\{\nu,\kappa\}\Gamma(\kappa)$ and  $\mA[\nu,\kappa] = \mtA[\nu,\kappa]/\Gamma(\nu+1)$. 
We are now able to state our formulae for the expected face numbers of beta' polytopes.

\begin{theorem}\label{theo:beta_poly_f_vector_tilde}
Let $d\in\N$ and $n\in\N$ be such that $n\geq d+1$. Let also $\beta > \frac d2$ and put $\alpha:= 2\beta - d > 0$. Then, for all $k\in \{1,\ldots,d\}$ such that $\alpha k>1$ we have
\begin{align}
\lefteqn{\E f_{k-1} (\tilde P_{n,d}^\beta)}\notag\\
&=
\frac{2\cdot n!}{k!} \left(\frac{\Gamma(\frac{\alpha+1}{2})}{2\sqrt \pi \, \Gamma(\frac{\alpha+2}{2})}\right)^{n-k}
\sum_{\substack{s=0,2,4,\ldots \\ m:=d-s\geq k}}
\mtB\{n,m\} \frac{\left(m - \frac 1\alpha\right)\Gamma(m)}{\Gamma\left(m+1-\frac 2\alpha\right)} \mtA\left[m - \frac 2\alpha, k - \frac 2\alpha\right]\label{eq:beta_prime_f_vect1}\\
&=
\frac{2\cdot n!}{k!} \left(\frac{\Gamma(\frac{\alpha+1}{2})}{2\sqrt \pi \, \Gamma(\frac{\alpha+2}{2})}\right)^{n-k}
\sum_{\substack{q=0,2,4,\ldots\\ d - q \geq k}} \mtB\left\{n+\frac{q}{\alpha}, d - q + \frac q \alpha\right\}\mtA\left[d - q + \frac q\alpha, k+\frac {q}{\alpha}\right].\label{eq:beta_prime_f_vect2}
\end{align}
Also, under the same conditions as above, the following complementary identities hold:
\begin{align}
\lefteqn{\binom nk - \E f_{k-1} (\tilde P_{n,d}^\beta)} \notag\\
&=
\frac{2\cdot n!}{k!} \left(\frac{\Gamma(\frac{\alpha+1}{2})}{2\sqrt \pi \, \Gamma(\frac{\alpha+2}{2})}\right)^{n-k}
\sum_{\substack{s=2,4,6,\ldots \\ m:=d+s\leq n}}
\mtB\{n,m\} \frac{\left(m - \frac 1\alpha\right)\Gamma(m)}{\Gamma\left(m+1-\frac 2\alpha\right)} \mtA\left[m - \frac 2\alpha, k - \frac 2\alpha\right]\label{eq:beta_prime_f_vect1_compl}\\
&=
\frac{2\cdot n!}{k!} \left(\frac{\Gamma(\frac{\alpha+1}{2})}{2\sqrt \pi \, \Gamma(\frac{\alpha+2}{2})}\right)^{n-k}
\sum_{\substack{q=2,4,6,\ldots\\ d + q \leq n}} \mtB\left\{n-\frac{q}{\alpha}, d + q - \frac q \alpha\right\}\mtA\left[d + q - \frac q\alpha, k- \frac {q}{\alpha}\right].\label{eq:beta_prime_f_vect2_compl}
\end{align}
\end{theorem}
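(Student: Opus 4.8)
The plan is to mirror the three-step argument behind the beta analogue in Theorem~\ref{theo:beta_poly_f_vector}, with the roles of $\rB,\rA$ played by the tilde quantities $\mtB,\mtA$. Formula~\eqref{eq:beta_prime_f_vect1} is already available from \cite[Section~7.2]{kabluchko_formula}, so the real content is to verify the three equivalences \eqref{eq:beta_prime_f_vect1}$\Leftrightarrow$\eqref{eq:beta_prime_f_vect2}, \eqref{eq:beta_prime_f_vect1_compl}$\Leftrightarrow$\eqref{eq:beta_prime_f_vect2_compl}, and \eqref{eq:beta_prime_f_vect1}$\Leftrightarrow$\eqref{eq:beta_prime_f_vect1_compl}.

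First I would record the recurrences for $\mtB$ and $\mtA$, which are the exact tilde counterparts of \eqref{eq:b_relation} and \eqref{eq:a_0_relation}. Since $\tilde F$ equals $F$ with $\alpha$ replaced by $\alpha-1$ and the defining integrands carry an additional factor $(\cos x)^{-1}$, the systematic effect is the sign flip $\tfrac 2\alpha\mapsto-\tfrac 2\alpha$, $\tfrac 1\alpha\mapsto-\tfrac 1\alpha$; concretely I expect
$$\mtB\{\nu,\kappa+2\}+\mtB\{\nu,\kappa\}=\frac{(\kappa+\tfrac 1\alpha)\Gamma(\kappa+\tfrac 2\alpha)}{\Gamma(\kappa+1)}\,\mtB\{\nu+\tfrac 2\alpha,\kappa+\tfrac 2\alpha\},$$
$$\mtA[\nu-2,\kappa]+\mtA[\nu,\kappa]=\frac{(\nu-\tfrac 1\alpha)\Gamma(\nu)}{\Gamma(\nu-\tfrac 2\alpha+1)}\,\mtA[\nu-\tfrac 2\alpha,\kappa-\tfrac 2\alpha].$$
These can be cited from \cite{kabluchko_formula} or reproved by the same integration-by-parts computation that yields \eqref{eq:b_relation}--\eqref{eq:a_0_relation}.

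Next I would establish the tilde analogue of Theorem~\ref{theo:identity_main} by the same ``chemical reaction'' bookkeeping. Denoting by $\tilde S_d(n,k)$ the sum on the left of \eqref{eq:beta_prime_f_vect1}, the $\mtA$-recurrence rewrites the coefficient $\tfrac{(m-1/\alpha)\Gamma(m)}{\Gamma(m+1-2/\alpha)}\,\mtA[m-\tfrac 2\alpha,k-\tfrac 2\alpha]$ as $\mtA[m-2,k]+\mtA[m,k]$; extracting the $m=d$ term, shifting the summation index, combining the two resulting sums, and applying the $\mtB$-recurrence regroups everything into $\mtB\{n,d\}\mtA[d,k]+\tilde S_{d-2+2/\alpha}(n+\tfrac 2\alpha,k+\tfrac 2\alpha)$. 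Iterating $r$ times deposits the sediment terms $\mtB\{n+\tfrac q\alpha,d-q+\tfrac q\alpha\}\mtA[d-q+\tfrac q\alpha,k+\tfrac q\alpha]$ for $q=0,2,\ldots,2r-2$, and once $r$ is large enough the residual sum is empty; the accumulated sediment is exactly the right-hand side of \eqref{eq:beta_prime_f_vect2}. Running the reaction in reverse, beginning with the disintegration of the $\mtB$-terms as in the proof of \eqref{eq:main_2}, yields \eqref{eq:beta_prime_f_vect1_compl}$\Leftrightarrow$\eqref{eq:beta_prime_f_vect2_compl}.

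Finally, for \eqref{eq:beta_prime_f_vect1}$\Leftrightarrow$\eqref{eq:beta_prime_f_vect1_compl} I would invoke the tilde analogue of Proposition~\ref{prop:relations_A_B}: the full sum of $\mtB\{n,m\}\tfrac{(m-1/\alpha)\Gamma(m)}{\Gamma(m+1-2/\alpha)}\,\mtA[m-\tfrac 2\alpha,k-\tfrac 2\alpha]$ over $m\in\{k,\ldots,n\}$ of each fixed parity equals $\tfrac{1}{2(n-k)!}\big(\tfrac{2\sqrt\pi\,\Gamma((\alpha+2)/2)}{\Gamma((\alpha+1)/2)}\big)^{n-k}$, which is proved in \cite{kabluchko_angles} just as Proposition~\ref{prop:relations_A_B} is; subtracting the even-parity sum \eqref{eq:beta_prime_f_vect1} from the full count $\binom nk$ then gives \eqref{eq:beta_prime_f_vect1_compl}. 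I expect the only real obstacle to be bookkeeping: fixing the signs and the half-integer shifts $\pm\tfrac 2\alpha$ in the two recurrences and checking that the telescoping closes up with precisely the arguments in \eqref{eq:beta_prime_f_vect2}. No genuinely analytic difficulty arises once the recurrences are in hand, and the hypothesis $\alpha k>1$ enters only to keep the smallest $\mtA$-argument $k-\tfrac 2\alpha$ inside the region $\alpha\nu+1>0$ where its defining contour integral converges.
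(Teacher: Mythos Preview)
Your overall strategy is exactly the one the paper uses: cite~\eqref{eq:beta_prime_f_vect1} from \cite{kabluchko_formula}, derive \eqref{eq:beta_prime_f_vect1}$\Leftrightarrow$\eqref{eq:beta_prime_f_vect2} and \eqref{eq:beta_prime_f_vect1_compl}$\Leftrightarrow$\eqref{eq:beta_prime_f_vect2_compl} by the same ``chemical reaction'' telescoping as in Theorem~\ref{theo:identity_main}, and get \eqref{eq:beta_prime_f_vect1}$\Leftrightarrow$\eqref{eq:beta_prime_f_vect1_compl} from the tilde analogue of Proposition~\ref{prop:relations_A_B} proved in~\cite{kabluchko_angles}. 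The structure, the order of operations, and the final sediment terms all match.

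There is one concrete slip in your recurrences. Your heuristic ``systematic sign flip $\tfrac2\alpha\mapsto-\tfrac2\alpha$, $\tfrac1\alpha\mapsto-\tfrac1\alpha$'' only accounts for the right-hand sides; the tilde relations in \cite[Eqs.~(6.15)--(6.16)]{kabluchko_formula} also flip the sign on the \emph{left}:
\[
\mtB\{\nu,\kappa\}-\mtB\{\nu,\kappa+2\}=\frac{(\kappa+\tfrac1\alpha)\Gamma(\kappa+\tfrac2\alpha)}{\Gamma(\kappa+1)}\,\mtB\Bigl\{\nu+\tfrac2\alpha,\kappa+\tfrac2\alpha\Bigr\},
\qquad
\mtA[\nu,\kappa]-\mtA[\nu-2,\kappa]=\frac{(\nu-\tfrac1\alpha)\Gamma(\nu)}{\Gamma(\nu-\tfrac2\alpha+1)}\,\mtA\Bigl[\nu-\tfrac2\alpha,\kappa-\tfrac2\alpha\Bigr].
\]
With your ``$+$'' versions the identities are simply false, so the proof as literally written would not stand. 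Fortunately the two sign changes cancel in the telescoping: after the $\mtA$-step you get $\mtB\{n,m-2\}-\mtB\{n,m\}$ rather than $\mtB\{n,m-2\}+\mtB\{n,m\}$, and the corrected $\mtB$-recurrence absorbs precisely that difference, yielding the very same sediment $\mtB\{n,d\}\mtA[d,k]+\tilde S_{d-2+2/\alpha}(n+\tfrac2\alpha,k+\tfrac2\alpha)$ you predicted. So once you replace your guessed recurrences by the ones actually in \cite{kabluchko_formula}, your argument is the paper's argument verbatim; your closing remark that ``the only real obstacle is bookkeeping'' was prescient.
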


Formula~\eqref{eq:beta_prime_f_vect1} has been derived in~\cite[Theorem~7.3]{kabluchko_formula}. We will prove the equivalences between~\eqref{eq:beta_prime_f_vect1} and~\eqref{eq:beta_prime_f_vect2} as well as between~\eqref{eq:beta_prime_f_vect1_compl} and~\eqref{eq:beta_prime_f_vect2_compl} in Theorem~\ref{theo:identity_main_tilde}. The equivalence between~\eqref{eq:beta_prime_f_vect1} and~\eqref{eq:beta_prime_f_vect1_compl} follows from the next proposition which we proved in Proposition~6.7 and Remark~6.8 of~\cite{kabluchko_angles}. 

\begin{proposition}\label{prop:relations_A_B_tilde}
Let $\alpha>0$.  If $n,k\in \C$ are complex variables such that $n-k\in \N$ is arbitrary but fixed (observe that $k=n$ is not allowed), then the following equalities of meromorphic functions in $n$ (or $k$) hold:
\begin{align}
\sum_{\substack{m\in \{k,\ldots,n\}\\ n-m \text{ is even}}}
\mtB\{n,m\} \frac{\left(m - \frac 1\alpha\right)\Gamma(m)}{\Gamma\left(m+1-\frac 2\alpha\right)} \mtA\left[m - \frac 2\alpha, k - \frac 2\alpha\right]
&=
\frac{1}{2 (n-k)!}\left(\frac{2\sqrt \pi \Gamma\left(\frac{\alpha+2}{2}\right)}{\Gamma\left(\frac{\alpha+1}{2}\right)}\right)^{n-k},
\label{eq:full_sum_tilde_1}\\
\sum_{\substack{m\in \{k,\ldots,n\}\\ n-m \text{ is odd}}}
\mtB\{n,m\} \frac{\left(m - \frac 1\alpha\right)\Gamma(m)}{\Gamma\left(m+1-\frac 2\alpha\right)} \mtA\left[m - \frac 2\alpha, k - \frac 2\alpha\right]
&=
\frac{1}{2 (n-k)!}\left(\frac{2\sqrt \pi \Gamma\left(\frac{\alpha+2}{2}\right)}{\Gamma\left(\frac{\alpha+1}{2}\right)}\right)^{n-k}.
 \label{eq:full_sum_tilde_2}
\end{align}
\end{proposition}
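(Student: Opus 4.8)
The plan is to prove~\eqref{eq:full_sum_tilde_1} and~\eqref{eq:full_sum_tilde_2} at once by reducing them to the integral representations~\eqref{eq:def_B_tilde} and~\eqref{eq:def_A_0_tilde1}. Write $\ell:=n-k\in\N$, abbreviate the summand by
$T\{n,m,k\}:=\mtB\{n,m\}\frac{(m-\frac1\alpha)\Gamma(m)}{\Gamma(m+1-\frac2\alpha)}\mtA[m-\frac2\alpha,k-\frac2\alpha]$,
and set $c:=\frac{2\sqrt\pi\,\Gamma(\frac{\alpha+2}{2})}{\Gamma(\frac{\alpha+1}{2})}$. Since the even- and odd-parity sums in~\eqref{eq:full_sum_tilde_1} and~\eqref{eq:full_sum_tilde_2} are $\tfrac12(\Sigma\pm\Delta)$, where
\[
\Sigma(n,k):=\sum_{m=k}^{n}T\{n,m,k\},\qquad \Delta(n,k):=\sum_{m=k}^{n}(-1)^{n-m}T\{n,m,k\},
\]
the proposition is equivalent to the two statements $\Sigma(n,k)=c^{\ell}/\ell!$ and $\Delta(n,k)=0$. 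Thus it suffices to evaluate the full sum and to prove that the alternating sum vanishes.

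Substituting~\eqref{eq:def_B_tilde} and~\eqref{eq:def_A_0_tilde1} directly into $T\{n,m,k\}$, the prefactor $\frac{(m-\frac1\alpha)\Gamma(m)}{\Gamma(m+1-\frac2\alpha)}$ is engineered precisely to cancel the Gamma-factors carried by $\mtA[m-\frac2\alpha,\cdot\,]$ and $\mtB\{n,m\}$, leaving
\[
T\{n,m,k\}=\frac{\alpha^{\ell+1}\,(m-\frac1\alpha)}{(n-m)!\,(m-k)!}\cdot\frac{1}{2\pi\ii}\iint (\cos x)^{\alpha m-1}(\cos y)^{1-\alpha m}\,(\tilde F(x))^{n-m}(\tilde F(y))^{m-k}\,\dd x\,\dd y,
\]
the $x$-integral running over $(-\tfrac\pi2,\tfrac\pi2)$ and the $y$-integral over $\ii\R$. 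The crucial observation is that $(\cos x)^{\alpha m-1}(\cos y)^{1-\alpha m}=w^{m}\frac{\cos y}{\cos x}$ with $w:=(\cos x/\cos y)^{\alpha}$, so all of the $m$-dependence sits in $w^{m}$ and in the elementary weights $\frac{m-\frac1\alpha}{(n-m)!(m-k)!}$. Summing over $m=k,\dots,n$ by the binomial theorem collapses the sum into powers of $W_{\pm}:=\tilde F(x)\pm w\,\tilde F(y)$: the full sum $\Sigma$ yields the bracket $(k-\tfrac1\alpha)\frac{W_+^{\ell}}{\ell!}+w\tilde F(y)\frac{W_+^{\ell-1}}{(\ell-1)!}$, while the alternating sum $\Delta$ yields $(k-\tfrac1\alpha)\frac{W_-^{\ell}}{\ell!}-w\tilde F(y)\frac{W_-^{\ell-1}}{(\ell-1)!}$ (up to the harmless factor $(-1)^{\ell}$).

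It remains to carry out the two resulting integrals. For the vanishing of $\Delta$ I would combine the endpoint relation $\tilde F(z)+\tilde F(-z)=\int_{-\pi/2}^{\pi/2}(\cos t)^{\alpha-1}\dd t$ (valid on both contours since $\cos$ is even) with a deformation of the vertical $y$-contour, the aim being to exhibit the $W_-$-integrand as an exact differential together with an antisymmetric remainder, so that it integrates to zero; this is the analytic form of the equality of the even and odd sums. For the value of $\Sigma$, an integration by parts in $x$ via $\frac{\dd}{\dd x}\tilde F(x)=(\cos x)^{\alpha-1}$ should reduce $W_+^{\ell}$ to a boundary contribution at $x=\pm\tfrac\pi2$; after the substitution $y=\ii t$ the remaining integral is of Beta type, $\int_{\R}(\cosh t)^{-s}\dd t=\sqrt\pi\,\Gamma(\tfrac s2)/\Gamma(\tfrac{s+1}2)$, and it produces exactly the Gamma-quotient whose appropriate power is $c^{\ell}/\ell!$.

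The main obstacle is precisely this concluding evaluation. The delicate points are the boundary terms at $x=\pm\tfrac\pi2$, where $\cos x\to 0$ and $\tilde F$ attains its endpoint values, and the rigorous justification of the contour deformation and of interchanging summation with integration along the unbounded imaginary axis. In particular, establishing $\Delta(n,k)=0$ exactly---the cancellation underlying the equality of the two partial sums---is the subtle step, being the integral counterpart of a Gram/Dehn--Sommerville-type relation; it is here that no purely formal manipulation of the recurrences~\eqref{eq:b_relation}--\eqref{eq:a_0_relation} suffices and a genuine computation is required. As a safeguard I would first settle the base case $\ell=1$, where $\Sigma$ and $\Delta$ collapse to the explicitly computable quantities $\mtB\{\nu,\nu\}$ and $\mtA[\nu,\nu]$ (each a Beta integral), thereby pinning down the constant $c$ and confirming the cancellation before treating general $\ell$.
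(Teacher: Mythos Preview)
The paper does not actually prove this proposition in the present text; immediately before the statement it says that the result ``we proved in Proposition~6.7 and Remark~6.8 of~\cite{kabluchko_angles}''. There is thus no internal argument to compare your proposal against, only an external citation.

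As to your proposal itself: the reduction to the pair of assertions $\Sigma(n,k)=c^{\ell}/\ell!$ and $\Delta(n,k)=0$ is correct, and the substitution of the integral representations~\eqref{eq:def_B_tilde}, \eqref{eq:def_A_0_tilde1} followed by the binomial collapse is carried out correctly at the level of a sketch (you suppress the factor $w^{k}\cos y/\cos x=(\cos x)^{\alpha k-1}(\cos y)^{1-\alpha k}$, which should be kept explicit). The genuine gap is the one you yourself name: the concluding integrals are never evaluated. Your integration-by-parts proposal for $\Sigma$ is not obviously workable as written, because $W_{+}=\tilde F(x)+w\,\tilde F(y)$ depends on $x$ both through $\tilde F(x)$ and through $w=(\cos x/\cos y)^{\alpha}$; hence $\tfrac{\dd}{\dd x}W_{+}^{\ell}$ does not reduce to $(\cos x)^{\alpha-1}$ times a lower power of $W_{+}$, and the extra contribution $\tfrac{\dd w}{\dd x}\,\tilde F(y)$ has to be controlled before any boundary evaluation can occur. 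For $\Delta=0$ you appeal to a contour deformation and an ``antisymmetric remainder'' without specifying either; the endpoint relation $\tilde F(z)+\tilde F(-z)=\mathrm{const}$ alone does not force the vanishing, since the integrand also carries the asymmetric weight $(\cos x)^{\alpha k-1}(\cos y)^{1-\alpha k}$. (A minor point: the recurrences you cite should be~\eqref{eq:b_relation_tilde} and~\eqref{eq:a_0_relation_tilde}, not their beta counterparts.) In short, the proposal is a reasonable outline of an analytic attack, but the decisive computations --- exactly the ones that distinguish a proof from a heuristic --- are left undone.
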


\subsection{Main relation for beta' polytopes}

The quantities $\mtA$ and $\mtB$ satisfy the following recurrence relations which were established in Equations~(6.15) and~(6.16) of~\cite{kabluchko_formula}:
\begin{align}
&\mtB\{\nu,\kappa\} - \mtB\{\nu,\kappa+2\}
=
\frac{\left(\kappa  +  \frac 1\alpha\right) \Gamma\left(\kappa  + \frac 2\alpha\right)} {\Gamma(\kappa+1)} \mtB \left\{\nu + \frac 2\alpha, \kappa + \frac 2 \alpha\right\},\label{eq:b_relation_tilde}\\
&
\mtA[\nu,\kappa] - \mtA[\nu-2,\kappa]
=
\frac{\left(\nu - \frac 1 \alpha\right)\Gamma(\nu)}{\Gamma\left(\nu - \frac 2 \alpha + 1\right)} \mtA \left[\nu  -  \frac 2\alpha, \kappa -  \frac 2 \alpha\right]. \label{eq:a_0_relation_tilde}
\end{align}
These identities are equalities between meromorphic functions, where $\nu,\kappa\in \C$ are complex variables such that $\nu-\kappa\in \N_0$ is fixed. We shall use these recurrence relations to derive the following 
\begin{theorem}\label{theo:identity_main_tilde}
Fix $\alpha>0$.
Let $n,d,k\in \C$ be such that $n-d\in \N_0$ and $d-k\in \N_0$ are arbitrary but fixed. Then, the following equalities of meromorphic functions in the complex variable $k$ (or $n$) hold:
\begin{multline}\label{eq:main_1tilde}
\sum_{\substack{s=0,2,4,\ldots \\ m:=d-s\geq k}}
\mtB\{n,m\} \frac{\left(m - \frac 1\alpha\right)\Gamma(m)}{\Gamma\left(m+1-\frac 2\alpha\right)} \mtA\left[m - \frac 2\alpha, k - \frac 2\alpha\right]
\\=
\sum_{\substack{q=0,2,4,\ldots\\ d - q \geq k}} \mtB\left\{n+\frac{q}{\alpha}, d - q + \frac q \alpha\right\}\mtA\left[d - q + \frac q\alpha, k+\frac {q}{\alpha}\right],
\end{multline}
\begin{multline}\label{eq:main_2tilde}
\sum_{\substack{s=2,4,6,\ldots \\ m:=d+s\leq n}}
\mtB\{n,m\} \frac{\left(m - \frac 1\alpha\right)\Gamma(m)}{\Gamma\left(m+1-\frac 2\alpha\right)} \mtA\left[m - \frac 2\alpha, k - \frac 2\alpha\right]
\\=
\sum_{\substack{q=2,4,6,\ldots\\ d + q \leq n}} \mtB\left\{n-\frac{q}{\alpha}, d + q - \frac q \alpha\right\}\mtA\left[d + q - \frac q\alpha, k-\frac {q}{\alpha}\right].
\end{multline}
\end{theorem}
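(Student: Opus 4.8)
The plan is to reproduce, almost verbatim, the telescoping ``chemical reaction'' from the proof of Theorem~\ref{theo:identity_main}, with the beta$'$ recurrences \eqref{eq:b_relation_tilde} and \eqref{eq:a_0_relation_tilde} playing the roles of \eqref{eq:b_relation} and \eqref{eq:a_0_relation}. The one structural difference is that the tilde recurrences are \emph{differences} rather than \emph{sums}, so the two $\mtA$-terms (respectively $\mtB$-terms) that meet during the regrouping recombine with an intervening minus sign; this is immaterial to the mechanism, since the recurrences are again tailor-made to match the Gamma-factor coefficients appearing in the sums.

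To prove \eqref{eq:main_1tilde} I would denote its left-hand side by $S_d(n,k)$ and first apply \eqref{eq:a_0_relation_tilde} with $\nu=m$, $\kappa=k$, which identifies the coefficient $\frac{(m-1/\alpha)\Gamma(m)}{\Gamma(m+1-2/\alpha)}\mtA[m-2/\alpha,k-2/\alpha]$ as exactly $\mtA[m,k]-\mtA[m-2,k]$, so that $S_d(n,k)=\sum_m \mtB\{n,m\}(\mtA[m,k]-\mtA[m-2,k])$. Splitting this into two sums, I extract the leading term $\mtB\{n,d\}\mtA[d,k]$ from the $+\mtA[m,k]$ part (the $s=0$ contribution) and discard the bottom term $\mtB\{n,k\}\mtA[k-2,k]$ of the $-\mtA[m-2,k]$ part, which vanishes by the convention $\mtA[\nu,\kappa]=0$ for $\nu-\kappa\in\{-1,-2,\ldots\}$. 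Shifting the index of the first sum by $2$ aligns both remaining sums over a common range, and \eqref{eq:b_relation_tilde} with $\nu=n$, $\kappa=m-2$ collapses the factor $\mtB\{n,m-2\}-\mtB\{n,m\}$ into a single $\mtB$-term. After the substitution $M:=m-2+2/\alpha$ the Gamma-factor and all three arguments realign so that the remaining sum is literally $S_{d-2+2/\alpha}(n+2/\alpha,k+2/\alpha)$; that is,
$$
S_d(n,k)=\mtB\{n,d\}\mtA[d,k]+S_{d-2+2/\alpha}(n+2/\alpha,\,k+2/\alpha).
$$
Iterating $r$ times deposits exactly the summands of the right-hand side of \eqref{eq:main_1tilde}, indexed by $q=0,2,\ldots,2r-2$; since after $j$ steps the parameters satisfy $d_j-k_j=(d-k)-2j$, for large $r$ the residual sum is empty and the iteration halts.

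Formula \eqref{eq:main_2tilde} is treated by the same scheme run in the reverse order. Writing its left-hand side as $S_d^{+}(n,k)$, I would first apply \eqref{eq:b_relation_tilde}, read as expressing the coefficient times $\mtB\{n,m\}$ as $\mtB\{n-2/\alpha,m-2/\alpha\}-\mtB\{n-2/\alpha,m-2/\alpha+2\}$, then extract the leading sediment, drop the top term of the second sum (which vanishes because its $\mtB$ has first argument smaller than its second), shift, and finally invoke \eqref{eq:a_0_relation_tilde} to recombine the two $\mtA$-terms. The outcome is the analogous one-step identity
$$
S_d^{+}(n,k)=\mtB\{n-2/\alpha,\,d+2-2/\alpha\}\,\mtA[d+2-2/\alpha,\,k-2/\alpha]+S_{d+2-2/\alpha}^{+}(n-2/\alpha,\,k-2/\alpha),
$$
whose iteration produces the summands of \eqref{eq:main_2tilde} for $q=2,4,6,\ldots$ and terminates once $n_j-d_j=(n-d)-2j$ drops below $2$, forcing the residual sum to be empty.

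I expect the only real obstacle to be bookkeeping: keeping the three interlocking shifts by $2/\alpha$ mutually consistent through the index substitution $M=m\mp2\pm2/\alpha$, so that the coefficient $\frac{(M-1/\alpha)\Gamma(M)}{\Gamma(M+1-2/\alpha)}$ and all of $n,d,k$ simultaneously reproduce the defining form of $S_d$ (respectively $S_d^{+}$) with correctly shifted parameters. A secondary point demanding care is to confirm at each extraction that the boundary term is either the intended sediment or vanishes identically by the convention for $\mtA$ and $\mtB$, so that no spurious contribution survives and the telescoping closes cleanly.
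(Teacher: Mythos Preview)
Your proposal is correct and follows essentially the same approach as the paper: the paper proves \eqref{eq:main_1tilde} by exactly the telescoping you describe, first using \eqref{eq:a_0_relation_tilde} to split the $\mtA$-factor, extracting the sediment $\mtB\{n,d\}\mtA[d,k]$, recombining via \eqref{eq:b_relation_tilde}, and substituting $m'=m-2+2/\alpha$ to obtain the one-step recursion $\tilde S_d(n,k)=\mtB\{n,d\}\mtA[d,k]+\tilde S_{d-2+2/\alpha}(n+2/\alpha,k+2/\alpha)$, which it then iterates. For \eqref{eq:main_2tilde} the paper simply says the proof is similar, so your explicit plan of running the recurrences in the reverse order (starting with \eqref{eq:b_relation_tilde}) is in fact more detailed than what the paper writes out.
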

\begin{proof}
We prove only~\eqref{eq:main_1tilde} since the proof of~\eqref{eq:main_2tilde} is similar. We are interested in the following sum:
\begin{align*}
\tilde S_d(n,k) := \sum_{\substack{s=0,2,4,\ldots \\ m:=d-s\geq k}}
\mtB\{n,m\} \frac{\left(m - \frac 1\alpha\right)\Gamma(m)}{\Gamma\left(m+1-\frac 2\alpha\right)} \mtA\left[m - \frac 2\alpha, k - \frac 2\alpha\right].
\end{align*}
Applying the usual procedure involving the use of~\eqref{eq:b_relation_tilde} and~\eqref{eq:a_0_relation_tilde}, we get
\begin{align*}
\tilde S_d(n,k)
&=
\sum_{\substack{s=0,2,4,\ldots \\ m:=d-s\geq k}} \mtB\{n,m\}\left( \mtA\left[m, k\right] - \mtA[m-2 , k]\right)\\
&=
\mtB\{n,d\} \mtA[d,k] + \sum_{\substack{s=2,4,6,\ldots \\ m:=d-s\geq k}} \mtB\{n,m\} \mtA\left[m, k\right]
-
\sum_{\substack{s=0,2,4,\ldots \\ m:=d-s\geq k+2}} \mtB\{n,m\} \mtA[m-2,k]\\
&=
\mtB\{n,d\} \mtA[d,k] + \sum_{\substack{s=0,2,4,\ldots \\ m:=d-s\geq k+2}} \mtB\{n,m-2\} \mtA\left[m-2, k\right]
-
\sum_{\substack{s=0,2,4,\ldots \\ m:=d-s\geq k+2}} \mtB\{n,m\} \mtA[m-2,k]\\
&=
\mtB\{n,d\} \mtA[d,k]
+
\sum_{\substack{s=0,2,4,\ldots \\ m:=d-s\geq k+2}} (\mtB\{n,m-2\} - \mtB\{n,m\})  \mtA\left[m-2, k\right]\\
&=
\mtB\{n,d\} \mtA[d,k] + \sum_{\substack{s=0,2,4,\ldots \\ m:=d-s\geq k+2}}   \mtB \left\{n + \frac 2\alpha, m-2 + \frac 2 \alpha\right\} \frac{\left(m-2  +  \frac 1\alpha\right) \Gamma\left(m-2  + \frac 2\alpha\right)} {\Gamma(m-1)}  \mtA\left[m-2, k\right]\\
&=
\mtB\{n,d\} \mtA[d,k] + \sum_{\substack{s=0,2,4,\ldots \\ m':=d - 2 + \frac 2\alpha - s \geq k + \frac 2\alpha}}
\mtB \left\{n + \frac 2\alpha, m'\right\} \frac{\left(m'  -  \frac 1\alpha\right) \Gamma\left(m'\right)} {\Gamma(m'+1- \frac 2\alpha)}  \mtA\left[m'-\frac 2\alpha, k\right]\\
&=
\mtB\{n,d\} \mtA[d,k] + \tilde S_{d-2+\frac 2\alpha} \left(n+\frac 2\alpha, k+\frac 2\alpha \right).
\end{align*}
Applying this identity $r$ times, we arrive at
$$
\tilde S_d(n,k) = \sum_{\substack{q=0,2,4,\ldots, 2r-2\\ d - q \geq k}} \mtB\left\{n+\frac{q}{\alpha}, d - q + \frac q \alpha\right\}\mtA\left[d - q + \frac q\alpha, k+\frac {q}{\alpha}\right] + \tilde S_{d-2r+\frac {2r}\alpha} \left(n+\frac {2r}\alpha, k+\frac {2r}\alpha \right).
$$
For sufficiently large $r$, the sum becomes empty and the proof of~\eqref{eq:main_1tilde} is complete.
\end{proof}

\begin{corollary}\label{cor:main_tilde}
Let $\alpha>0$.
If $n,k\in \C$ are complex variables such that $n-k\in \N$ is arbitrary but fixed (observe that $k=n$ is not allowed), then the following equalities of meromorphic functions in $n$ (or $k$) hold:
\begin{align*}
&\sum_{\substack{q=0,2,4,\ldots\\ n - q \geq k}} \mtB\left\{n+\frac{q}{\alpha}, n - q + \frac q \alpha\right\}\mtA\left[n - q + \frac q\alpha, k+\frac {q}{\alpha}\right]
=\frac{1}{2 (n-k)!}\left(\frac{2\sqrt \pi \Gamma\left(\frac{\alpha+2}{2}\right)}{\Gamma\left(\frac{\alpha+1}{2}\right)}\right)^{n-k},\\
&\sum_{\substack{q=0,2,4,\ldots\\ n - 1 - q \geq k}} \mtB\left\{n+\frac{q}{\alpha}, n - 1-  q + \frac q \alpha\right\}\mtA\left[n - 1 - q + \frac q\alpha, k+\frac {q}{\alpha}\right]
=\frac{1}{2 (n-k)!}\left(\frac{2\sqrt \pi \Gamma\left(\frac{\alpha+2}{2}\right)}{\Gamma\left(\frac{\alpha+1}{2}\right)}\right)^{n-k}.
\end{align*}
\end{corollary}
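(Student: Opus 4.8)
The plan is to obtain this as an immediate consequence of the main identity for beta$'$ polytopes, Theorem~\ref{theo:identity_main_tilde}, following verbatim the strategy by which Corollary~\ref{cor:main} was derived from Theorem~\ref{theo:identity_main} in the beta case. The key observation is that the two full sums evaluated in Proposition~\ref{prop:relations_A_B_tilde} coincide with the left-hand side of~\eqref{eq:main_1tilde} once the free index $d$ is specialized to $n$ and to $n-1$, respectively; the right-hand side of~\eqref{eq:main_1tilde} then produces exactly the two sums appearing in the corollary.

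Concretely, I would first put $d=n$ in~\eqref{eq:main_1tilde}. On the left the running index is $m=n-s$ with $s$ even and $m\geq k$, so $m$ ranges over $\{k,\ldots,n\}$ subject to $n-m$ even; this is precisely the summation in~\eqref{eq:full_sum_tilde_1}, whose value is the asserted constant $\frac{1}{2(n-k)!}\bigl(2\sqrt{\pi}\,\Gamma(\tfrac{\alpha+2}{2})/\Gamma(\tfrac{\alpha+1}{2})\bigr)^{n-k}$. At the same time the right-hand side of~\eqref{eq:main_1tilde} with $d=n$ is literally the sum on the left of the first assertion of the corollary, which therefore equals this constant.

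Next I would put $d=n-1$. Now the left-hand running index is $m=(n-1)-s$ with $s$ even and $m\geq k$, so $m$ ranges over $\{k,\ldots,n-1\}$ with $n-m$ odd (the value $m=n$, for which $n-m=0$ is even, is automatically absent). This matches the summation in~\eqref{eq:full_sum_tilde_2} and again evaluates to the same constant, while the right-hand side of~\eqref{eq:main_1tilde} with $d=n-1$ is exactly the sum on the left of the second assertion. This completes the argument.

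The only point requiring care is the bookkeeping of parities and index ranges just outlined, together with the admissibility of the two specializations: Theorem~\ref{theo:identity_main_tilde} demands $n-d\in\N_0$, so both $d=n$ and $d=n-1$ are permitted, and the corollary's hypothesis $n-k\in\N$ (with $k=n$ excluded) is exactly what Proposition~\ref{prop:relations_A_B_tilde} requires. Since all the analytic content has already been absorbed into Theorem~\ref{theo:identity_main_tilde} and Proposition~\ref{prop:relations_A_B_tilde}, there is no genuine obstacle beyond this routine matching of summation indices.
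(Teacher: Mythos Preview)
Your proposal is correct and follows exactly the same approach as the paper: the paper's proof is the one-line instruction ``Use Theorem~\ref{theo:identity_main_tilde} with $d=n$ and $d=n-1$ and combine the result with Proposition~\ref{prop:relations_A_B_tilde},'' which is precisely what you spell out in detail. Your bookkeeping of the parity and range of $m$ (in particular the observation that $m=n$ is automatically absent from the odd-parity sum) is accurate, and the admissibility check $n-d\in\N_0$, $d-k\in\N_0$ for both specializations is correct.
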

\begin{proof}
Use Theorem~\ref{theo:identity_main_tilde} with $d=n$ and $d=n-1$ and combine the result with Proposition~\ref{prop:relations_A_B_tilde}.
\end{proof}

Observe that~\eqref{eq:main_2tilde} can also be proved in the following way: it is a consequence of~\eqref{eq:main_1tilde} together with Proposition~\ref{prop:relations_A_B_tilde} and Corollary~\ref{cor:main_tilde}.

\section{Identities for Stirling numbers}
The proofs of the above results are purely algebraic manipulations using the recurrence relations satisfied by the quantities $\rA$, $\rB$, $\mtA$, $\mtB$. These relations bear some similarity to the following well-known~\cite[\S 6.1]{graham_knuth_patashnik_book} relations satisfied by the Stirling numbers of both kinds, denoted by $\stirling{n}{k}$ and $\stirlingb{n}{k}$:
\begin{equation}\label{eq:stirling_relations}
\stirling{n+1}{k} - \stirling{n}{k-1} = n\stirling{n}{k},
\qquad
\stirlingb{n+1}{k} - \stirlingb{n}{k-1} = k\stirlingb{n}{k}.
\end{equation}
One may therefore ask whether similar manipulations also yields some identities for the Stirling numbers. This is indeed the case, as the next theorem shows.

\begin{theorem}\label{theo:stirling_identities}
For all $n\in \N$, $d\in \{0,\ldots,n\}$ and $k\in \{0,\ldots,d\}$ the following identities hold true:
$$
\sum_{s=0}^k \stirlingb{n-s}{d-s} (d-s) \stirling{d-s}{k-s}
=
\sum_{s=0}^k (-1)^s \stirlingb{n-s}{d} \stirling{d+1}{k-s}
=
\sum_{s=0}^{d-k} (-1)^s \stirlingb{n+1}{d-s} \stirling{d-s}{k}.
$$
\end{theorem}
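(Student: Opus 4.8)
The plan is to mimic the ``chemical reaction'' technique used throughout the paper, transferring it to the Stirling setting via the recurrences in~\eqref{eq:stirling_relations}. The key observation is that the two equalities in the theorem should be proved by two separate iterations, each starting from a different end of the chain. For the first equality, I would define
$$
T_d(n,k) := \sum_{s=0}^k \stirlingb{n-s}{d-s} (d-s) \stirling{d-s}{k-s}
$$
and use the second relation in~\eqref{eq:stirling_relations} to split $(d-s)\stirlingb{n-s}{d-s}$ as $\stirlingb{n-s+1}{d-s} - \stirlingb{n-s}{d-s-1}$. After this substitution I expect the sum to reorganize so that the $\stirling{}{}$-recurrence (first relation in~\eqref{eq:stirling_relations}) can be applied to recombine terms, extracting a single ``sediment'' term of the form $\pm \stirlingb{n-s}{d}\stirling{d+1}{k-s}$ at each stage while the remaining sum reproduces $T_d$ with shifted arguments. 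Iterating until the residual sum becomes empty would then yield the middle expression with its alternating sign $(-1)^s$.

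First I would carry out the index bookkeeping carefully for one step, writing out $T_d(n,k)$ after the substitution and regrouping so that the telescoping structure is visible. The natural guess is that the middle sum
$$
\sum_{s=0}^k (-1)^s \stirlingb{n-s}{d}\stirling{d+1}{k-s}
$$
arises as the accumulated sediment, and that one should verify the reaction terminates because the upper summation limit shrinks. For the second equality, connecting the middle expression to
$$
\sum_{s=0}^{d-k} (-1)^s \stirlingb{n+1}{d-s}\stirling{d-s}{k},
$$
I would run the analogous argument but begin by disintegrating the $\stirling{d+1}{k-s}$ terms (using the first relation to write $\stirling{d+1}{k-s} = \stirling{d}{k-s-1} + d\stirling{d}{k-s}$, or its appropriate shifted form) and then reassembling the $\stirlingb{}{}$-factors via the second relation, so the roles of the two kinds of Stirling numbers are interchanged relative to the first computation. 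Both iterations are, in spirit, identical to the proofs of Theorem~\ref{theo:identity_A_B_1} and Theorem~\ref{theo:identity_main}, with the squares $(k+1)^2$ and the $\Gamma$-factors replaced by the simpler linear factors appearing in~\eqref{eq:stirling_relations}.

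The main obstacle I anticipate is purely combinatorial: getting the sign pattern $(-1)^s$ and the exact shifts of $n,d,k$ correct in each telescoping step, since unlike the beta/beta' case the recurrences here are not symmetric (one carries the factor $n$, the other the factor $k$), so the two halves of the identity do not follow from a single symmetric manipulation and must be handled by genuinely distinct reactions. I would also need to check the boundary behavior, namely that the conventions $\stirlingb{m}{j} = \stirling{m}{j} = 0$ for $j<0$ or $j>m$ make the extracted and residual sums well-defined and that the reaction truly terminates with an empty sum; this is where an off-by-one error in the summation bounds $0 \le s \le k$ versus $0 \le s \le d-k$ is most likely to hide, and verifying it against a small explicit case (say $n=4$, $d=2$, $k=1$) would be a prudent sanity check before committing to the general induction.
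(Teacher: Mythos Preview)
Your approach is essentially the one in the paper: define the left-hand sum, apply one Stirling recurrence to disintegrate, regroup, apply the other to recombine, peel off a sediment term, iterate. There is, however, a swap in your plan that you would discover the moment you wrote out one step. Splitting $(d-s)\stirlingb{n-s}{d-s} = \stirlingb{n-s+1}{d-s} - \stirlingb{n-s}{d-s-1}$ and then recombining with the $\stirling{}{}$-recurrence produces the sediment $\stirlingb{n+1}{d}\stirling{d}{k}$ and the recursion $T_d(n,k) = \stirlingb{n+1}{d}\stirling{d}{k} - T_{d-1}(n,k)$, which iterates to the \emph{rightmost} expression $\sum_{s=0}^{d-k}(-1)^s\stirlingb{n+1}{d-s}\stirling{d-s}{k}$, not the middle one. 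To obtain the middle expression you must do the reactions in the opposite order: first write $(d-s)\stirling{d-s}{k-s} = \stirling{d-s+1}{k-s} - \stirling{d-s}{k-s-1}$, regroup, and then apply the $\stirlingb{}{}$-recurrence, which gives $T_d(n,k) = \stirlingb{n}{d}\stirling{d+1}{k} - T_d(n-1,k-1)$.

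The paper runs both reactions starting from the leftmost sum $T_d(n,k)$, obtaining left~$=$~middle and left~$=$~right separately; it does not chain through the middle expression as you propose for the second half. Either route is fine, but since the computation from $T_d$ is already set up, reusing it with the reactions in the other order is slightly cleaner than starting a fresh reaction from the middle sum.
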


\begin{proof}
We are going to transform the following sum in two different ways:
$$
S_d(n,k) := \sum_{s=0}^{k-1} \stirlingb{n-s}{d-s} (d-s) \stirling{d-s}{k-s}.
$$
Using the relation for $\stirling{n}{k}$, regrouping the terms and then using the relation for $\stirlingb{n}{k}$ yields
\begin{align*}
S_{d}(n,k)
&=
\sum_{s=0}^{k-1} \stirlingb{n-s}{d-s} \left( \stirling{d-s+1}{k-s} - \stirling{d-s}{k-s-1}\right)\\
&=
\stirlingb{n}{d} \stirling{d+1}{k} +  \sum_{s=1}^{k-1} \stirlingb{n-s}{d-s} \stirling{d-s+1}{k-s} - \sum_{s=0}^{k-2} \stirlingb{n-s}{d-s}\stirling{d-s}{k-s-1}\\
&=
\stirlingb{n}{d} \stirling{d+1}{k} +  \sum_{s=0}^{k-2} \stirlingb{n-s-1}{d-s-1} \stirling{d-s}{k-s-1} - \sum_{s=0}^{k-2} \stirlingb{n-s}{d-s}\stirling{d-s}{k-s-1}\\
&=
\stirlingb{n}{d} \stirling{d+1}{k} +  \sum_{s=0}^{k-2} \left(\stirlingb{n-s-1}{d-s-1} - \stirlingb{n-s}{d-s}\right) \stirling{d-s}{k-s-1}\\
&=
\stirlingb{n}{d} \stirling{d+1}{k} -  \sum_{s=0}^{k-2} \stirlingb{n-s-1}{d-s} (d-s) \stirling{d-s}{k-s-1}\\
&=
\stirlingb{n}{d} \stirling{d+1}{k} - S_{d}(n-1,k-1).
\end{align*}
Repeating the above $r$ times  gives
\begin{align*}
S_{d}(n,k)
&=
\stirlingb{n}{d} \stirling{d+1}{k} - \stirlingb{n-1}{d} \stirling{d+1}{k-1} + \ldots +(-1)^{r-1} \stirlingb{n-r+1}{d} \stirling{d+1}{k-r+1} \\ &+(-1)^{r} S_{d}(n-r,k-r).
\end{align*}
For sufficiently large $r$, the last term on the right-hand side becomes an empty sum, and we arrive at the first identity in Theorem~\ref{theo:stirling_identities}.

To prove the second identity, we transform $S_d(n,k)$ by first using the relation for $\stirlingb{n}{k}$, regrouping the terms and then using the relation for $\stirling{n}{k}$ as follows:
\begin{align*}
S_{d}(n,k)
&=
\sum_{s=0}^{k-1} \stirlingb{n-s}{d-s} (d-s) \stirling{d-s}{k-s}\\
&=
\sum_{s=0}^{k-1} \left(\stirlingb{n-s+1}{d-s} - \stirlingb{n-s}{d-s-1}\right) \stirling{d-s}{k-s}\\
&=
\stirlingb{n+1}{d} \stirling{d}{k} + \sum_{s=1}^{k-1} \stirlingb{n-s+1}{d-s}\stirling{d-s}{k-s} - \sum_{s=0}^{k-1} \stirlingb{n-s}{d-s-1}\stirling{d-s}{k-s}\\
&=
\stirlingb{n+1}{d} \stirling{d}{k} + \sum_{s=0}^{k-2} \stirlingb{n-s}{d-s-1}\stirling{d-s-1}{k-s-1} - \sum_{s=0}^{k-1} \stirlingb{n-s}{d-s-1}\stirling{d-s}{k-s}\\
&=
\stirlingb{n+1}{d} \stirling{d}{k} + \sum_{s=0}^{k-1} \stirlingb{n-s}{d-s-1} \left(\stirling{d-s-1}{k-s-1} - \stirling{d-s}{k-s}\right)\\
&=
\stirlingb{n+1}{d} \stirling{d}{k} - \sum_{s=0}^{k-1} \stirlingb{n-s}{d-s-1} (d-s-1) \stirling{d-s-1}{k-s} \\
&=
\stirlingb{n+1}{d} \stirling{d}{k} - S_{d-1}(n,k).
\end{align*}
Repeating the above $r$ times, we get
$$
S_{d}(n,k)
=
\stirlingb{n+1}{d} \stirling{d}{k} - \stirlingb{n+1}{d-1} \stirling{d-1}{k} + \ldots + (-1)^{r-1} \stirlingb{n+1}{d-r+1} \stirling{d-r+1}{k} +(-1)^r S_{d-r}(n,k).
$$
For sufficiently large $r$, the last term on the right-hand side becomes an empty sum and the proof of the second  identity in Theorem~\ref{theo:stirling_identities} is complete.
\end{proof}

\section*{Acknowledgement}
Supported by the German Research Foundation under Germany's Excellence Strategy  EXC 2044 -- 390685587, Mathematics M\"unster: Dynamics - Geometry - Structure.


\bibliography{face_numbers_formulae_bib}
\bibliographystyle{plainnat}

\end{document}